\newtheorem{lemma}{Lemma}[section]
\newtheorem{corollary}[lemma]{Corollary} 
\newtheorem{proposition}[lemma]{Proposition}
\newtheorem{theorem}[lemma]{Theorem}
\newtheorem{ex}[lemma]{Example}
\theoremstyle{definition}
\newtheorem{remark}[lemma]{Remark}
\newcommand{\R}{\mathbb{R}}
\renewcommand{\S}{\mathbb{S}}
\newcommand{\K}{\mathcal{K}}
\renewcommand{\H}{\mathcal{H}}
\newcommand{\B}{\mathcal{B}}
\newcommand{\F}{\mathcal{F}}
\newcommand{\A}{\mathcal{A}}
\newcommand{\eps}{\varepsilon}
\newcommand{\be}{\begin{equation}}
\newcommand{\ee}{\end{equation}}
\DeclareMathOperator{\Tr}{tr}
\DeclareSymbolFont{yhlargesymbols}{OMX}{yhex}{m}{n}
\DeclareMathAccent{\wideparen}{\mathord}{yhlargesymbols}{"F3}
\numberwithin{equation}{section}
\title[Uniqueness of the critical point   for solutions of  some $p$-Laplace equations ]{Uniqueness of the critical point for solutions \\ of  some $p$-Laplace equations in the plane }
\author[W.\ Borrelli]{William Borrelli}
\author[S. \ Mosconi]{Sunra Mosconi}
\author[M.\ Squassina]{Marco Squassina}
\address[W.\ Borrelli]{Dipartimento di Matematica e Fisica
	\newline\indent
	Universit\`a Cattolica del Sacro Cuore
	\newline\indent
	Via della Garzetta 48, I-25133 Brescia, Italy}
\email{william.borrelli@unicatt.it}
\address[S.\ Mosconi]{Department of Mathematics and Computer Science
	\newline\indent
	University of Catania
	\newline\indent
	Viale A. Doria 6, I-95125 Catania, Italy}
\email{sunra.mosconi@unict.it}
\address[M.\ Squassina]{Dipartimento di Matematica e Fisica
	\newline\indent
	Universit\`a Cattolica del Sacro Cuore
	\newline\indent
	Via della Garzetta 48, I-25133 Brescia, Italy}
\email{marco.squassina@unicatt.it}
\subjclass[2010]{35J92, 35B50, 26B25}
\keywords{Quasilinear problems, convexity of solutions, maximum principles}
\begin{document}

\begin{abstract}
 We prove that quasi-concave positive solutions to a class of quasi-linear elliptic equations driven by the $p$-laplacian in convex bounded domains of the plane   have only one critical point. As a consequence, we obtain \emph{strict concavity} results for suitable transformations of these solutions.  
 \end{abstract}

\dedicatory{Ad Antonio Ambrosetti, Maestro dell'Analisi Nonlineare, con grande affetto ad ammirazione.}

\maketitle

	\begin{center}
		\begin{minipage}{9cm}
			\small
			\tableofcontents
		\end{minipage}
	\end{center}
 
 \section{Introduction}

\subsection{Overview}
The goal of the present paper is to prove uniqueness of the critical point for solutions of the quasi-linear problem
\begin{equation}\label{equation}
\begin{cases}
	-\Delta_p u=f(u), & \text{in $\Omega$,}   \\
	u>0, & \text{in $\Omega$,}   \\
	u=0, & \text{on $\partial\Omega$\,,}   
\end{cases}
\end{equation}
where $p>1$,  $\Omega\subseteq \R^2$ is a bounded and convex open set and $f$ is a suitable reaction, ensuring that $u$ is actually quasi-concave, meaning that its super-level sets  are convex. 

In order to fix ideas and present the problem, let us suppose for the moment that $f\equiv 1$, so that we are actually looking at the so-called $p$-{\em torsion function}, and $\Omega$ is smooth and strongly convex, but without any symmetry (otherwise other approaches based on \cite{CS} are fruitful). 

For $p=2$ and in the plane, the quasi-concavity of the torsion function $u$ goes back to Makar-Limanov \cite{Makar}, after which many other reactions $f$ have been considered in \cite{ALL, BL, korevaar, kore2, kennington, kaw, CS}. The uniqueness of the critical point for the torsion function of convex domains was first proved via complex functions methods in \cite{haegi} and then reproved in \cite{Makar} and \cite{Gustaffson}; in \cite{APP} the result is obtained via an estimate on the curvature of the level sets. A more fruitful approach was developed by Caffarelli and Friedman in \cite{CaFri}, where they proved that the Hessian of $\sqrt{u}$ is of constant (and thus, full) rank in $\Omega$.  All the previous results have been obtained in the plane.

Caffarelli and Friedman's approach, which is nowadays called {\em Constant Rank Theorem} or {\em Microscopic Convexity Principle},  has then been generalised to arbitrary dimensions in \cite{korlew, bianguan}.  It leads to the uniqueness of the critical point for solutions $u$ of quite general elliptic nonlinear problems of the type
\be
\label{fnl}
\begin{cases}
G(D^2 u, Du, u)=0 &\text{in $\Omega$}\\
u=0&\text{on $\partial\Omega$}
\end{cases}
\ee
 for a strongly convex $\Omega\subseteq \R^N$, via the following route. 

\begin{enumerate}
\item Under suitable convexity-type assumptions on $G$ (see e.\,g.\,\cite{ALL}), there exists an increasing $\varphi$ such that $v=\varphi\circ u$ satisfies a structurally similar elliptic equation and is concave. The critical points for $v$ and $u$ coincide, and are therefore their maximum points.
\item
 By the constant rank principle of \cite{bianguan} (which applies to concave  solutions of \eqref{fnl}),  the Hessian of $v$ has constant rank; the boundary behaviour and the strong convexity of $\Omega$ force  $D^2v$ to have full rank   near $\partial\Omega$, thus everywhere. It follows that $v$ is strictly concave and has a unique maximum point, and so does $u$.
 \end{enumerate}
  As a byproduct of this argument, it turns out that the positive super-level sets of $u$ are strictly convex  and that its maximum point is non-degenerate.
 
This line of proof unfortunately fails for problem \eqref{equation}, even in the model case $f\equiv 1$ and in the plane. The first step still goes through since,  for the solution $u$ of the $p$-torsion problem, the  function $u^{1-1/p}$ is known to be concave by \cite{Saka}. Step two, however, is problematic. The constant rank theorem requires ellipticity of $F$, which lacks for \eqref{equation} precisely at the maximum points of $u^{1-1/p}$. With the available theory, therefore, the best one can prove  is that $u^{1-1/p}$ is strictly convex outside its maximum points, which says nothing about their number. 

Notice that the issue is not a merely technical one. In the unit ball the $p$-torsion function is of the form $u(x)=c\, (1-|x|^{p/(p-1)})$, which is not twice differentiable at the origin if $p>2$. More substantially, for $p<2$, $u$ is actually $C^2$ and $v=u^{1-1/p}$ is concave, but $D^2 v(0)=0$ while  $D^2 v$ has full rank elsewhere, so that the constant rank principle is actually {\em false}. 

Let us finally mention that the $p$-torsion function of a convex domain $\Omega\subseteq \R^N$ has strictly convex super-level sets thanks to \cite{kore3}, but only for levels strictly between $0$ and the maximum of $u$, and this again says nothing about the uniqueness of its critical point.
 
 \subsection{Main result}

 Our approach is in some sense opposite to the one described above. We first prove that the maximum point for solutions $u$ of \eqref{equation} is unique, and then derive the strict concavity of suitable transformations $v=\varphi\circ u$ for an increasing $\varphi$ depending on the reaction $f$. Our main result, for convex bounded domains in the plane, is the following. 

\begin{theorem}
\label{Mth}
Let $f\in  {\rm Lip}_{\rm loc}(\R_+, \R_+)$ be such that $t\mapsto f(t)/t^{p-1}$ is non-increasing on $(0, +\infty)$. If $u\in C^1(\Omega)$ is a quasi-concave solution of \eqref{equation} in a convex bounded $\Omega\subseteq \R^2$, then ${\rm Argmax}(u)$ is a single point.
\end{theorem}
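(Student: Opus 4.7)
The plan is to argue by contradiction. Set $M := \max_\Omega u$ and $K := \{u = M\}$. Since $K = \bigcap_{c < M}\{u \ge c\}$ is an intersection of convex super-level sets, $K$ is convex, and if it is not a single point it contains a non-degenerate segment $S = [x_0, x_1]$. As $u \in C^1$ and every point of $S$ is a global maximum, $\nabla u \equiv 0$ on $S$. In fact, a $C^1$ quasi-concave function has all its critical points in its maximum set (along any line through a critical point the restriction is quasi-concave with vanishing derivative, which forces a maximum), so $\{\nabla u = 0\} = K$, and by the standard regularity theory for the $p$-Laplacian $u \in C^{2,\alpha}_{\mathrm{loc}}(\Omega \setminus K)$.

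After a rigid motion we may assume $S \subset \R \times \{0\}$ with $0 \in S^\circ$. The quasi-concavity of $y \mapsto u(x_1, y)$ at each $(x_1, 0) \in S^\circ$ forces $U := \partial_y u \le 0$ in $\Omega^+ := \Omega \cap \{y > 0\}$ near $S^\circ$, with $U \equiv 0$ on $S^\circ \subset \partial \Omega^+$. The strategy is then to apply the strong maximum principle and Hopf's lemma to $U$ on $\Omega^+$: differentiating $-\Delta_p u = f(u)$ in $y$ yields a linear equation for $U$ which is uniformly elliptic on $\{\nabla u \ne 0\} = \Omega \setminus K$ but degenerates precisely on $S \subset \partial \Omega^+$, and this is the main obstacle.

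To tame the degeneracy we exploit the Brezis--Oswald structure $t \mapsto f(t)/t^{p-1}$ non-increasing via a concave transform $v := \varphi(u)$. For a suitable increasing $\varphi$ (in the spirit of Sakaguchi's construction), $v$ is concave on $\Omega$ and satisfies a modified equation of the type $\operatorname{div}(g(v)\,|\nabla v|^{p-2}\nabla v) = \tilde f(v)$ in which the weight $g(v) > 0$ compensates for the gradient degeneracy; the linearization in $y$ then gives a linear equation for $V := \partial_y v$ that is uniformly elliptic in a full tubular neighborhood of $S^\circ$ inside $\Omega^+$. The function $V$ inherits $V \le 0$ in $\Omega^+$ near $S^\circ$ and $V \equiv 0$ on $S^\circ$.

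The contradiction then comes from the strong maximum principle and Hopf's lemma applied to $V$. Either $V \equiv 0$ in the relevant component of $\Omega^+$, which forces $u$ to be independent of $y$ there and contradicts $u|_{\partial \Omega} = 0$; or $V < 0$ strictly inside and Hopf applied at a point $(x_1, 0) \in S^\circ$ to an interior ball in $\Omega^+$ tangent to $\{y = 0\}$ yields $\partial_y V(x_1, 0) > 0$, i.e.\ $\partial^2_{yy} v(x_1, 0) > 0$. But $v$ has a one-dimensional maximum along $\{x = x_1\}$ at $y = 0$, so $\partial^2_{yy} v(x_1, 0) \le 0$, a contradiction. The hard part will be identifying the right concave transform $\varphi$ under the sole hypothesis that $f(t)/t^{p-1}$ be non-increasing and verifying that its linearization is uniformly elliptic up to $S^\circ$, since this is precisely what rescues the Hopf/strong maximum principle argument from the degeneracy locus of the original equation.
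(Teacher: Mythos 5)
Your plan hinges on two claims that do not hold under the hypotheses of the theorem, and the second one is precisely the obstruction the paper is built to avoid. First, the existence of an increasing $\varphi$ making $v=\varphi(u)$ concave is not available here: the theorem only assumes that $u$ is quasi-concave and that $t\mapsto f(t)/t^{p-1}$ is non-increasing, and quasi-concavity of $u$ does not yield any concave increasing reparametrization; concavity of transforms such as $\varphi(u)$ in \eqref{defvarphi} requires the extra structural hypotheses of Corollaries \ref{sclog}--\ref{scvarphi} (via \cite{BMS}) and is an input there, not a consequence of the assumptions of Theorem \ref{Mth}. (Also, your parenthetical claim that a $C^1$ quasi-concave function has all critical points in its maximum set is false as stated: $x\mapsto x^3$ is quasi-concave with a non-maximal critical point; this is minor, but it signals that the regularity bookkeeping near $S$ needs the equation, not just quasi-concavity.)

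Second, and decisively, no change of unknown $v=\varphi(u)$ with $\varphi$ increasing can make the linearized equation for $V=\partial_y v$ uniformly elliptic up to $S$. The critical set of $v$ equals that of $u$, which contains $S$, and the transformed equation (cf. \eqref{eq:veq}) still has the $p$-Laplacian as its principal part; its linearization has second-order coefficients of the form $|\nabla v|^{p-2}\bigl(I+(p-2)\,\tfrac{\nabla v}{|\nabla v|}\otimes\tfrac{\nabla v}{|\nabla v|}\bigr)$, which degenerate (for $p>2$) or blow up (for $p<2$) exactly as $\nabla v\to 0$ on $S$; a positive zeroth-order weight $g(v)$, which tends to a constant on $S$, cannot compensate a factor $|\nabla v|^{p-2}$. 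Worse, near $S$ the solution is in general only $C^{1,\alpha}$ (the radial $p$-torsion function is not $C^2$ at its maximum for $p>2$), so $V$ need not solve any linear equation up to $S$, and the Hopf-type inequality you want at a point of $S^\circ$ — where $\nabla u=\nabla v=0$ — is exactly the strong-comparison/strong-maximum information that is \emph{not} known for the $p$-Laplacian at joint critical points (the paper points out it is only available for $p>3/2$ in the plane, citing \cite{DS}). This is why the paper's proof takes a different route: Aleksandrov reflection on well-chosen foldable caps of the super-level sets (Lemmas \ref{fold}, \ref{section}, \ref{rectangle}), combined with the weak comparison principle of Lemma \ref{wc} and Damascelli's strong comparison principle (Proposition \ref{sc}) applied only away from the joint critical set, so that no Hopf lemma or linearization at the degenerate set is ever needed. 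As it stands, your argument has a genuine gap at its central step and would not go through for $p\neq 2$.
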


\begin{remark}
Let us make some comments on the assumptions.
\begin{itemize}
\item 
Notice that we are assuming $f(t)>0$ for $t>0$, which ensures that the set  of maximum points has zero measure, thanks to \cite{lou}. This condition  will be assumed in all the manuscript.
\item
The Lipschitz regularity of $f$ is needed to apply some strong comparison principle away from the critical set, proved in \cite{damascelli}.
\item
The assumed monotonicity of $t\mapsto f(t)/t^{p-1}$ ensures the validity of a local weak comparison principle for positive solutions  of  \eqref{equation}, see Lemma \ref{wc}.
\item
The convex body $\Omega$ can have flat parts and corners, i.\,e.\,no strict convexity or regularity (beyond the natural Lipschitz one) is assumed.
\end{itemize}
\end{remark}

A first application of the previous theorem is the following.

\begin{corollary}\label{sclog}
Let $u\in W^{1,p}_0(\Omega)$ solve  \eqref{equation} in a bounded convex domain $\Omega\subset\R^2$ with $C^2$ boundary, where $f\in C^0\big([0, +\infty), [0, +\infty)\big)\cap C^{1, \alpha}_{\rm loc}(\R_+, \R_+)$  for some $0<\alpha<1$ satisfies 
\begin{enumerate}
\item
 $t\mapsto f(t)/t^{p-1}$ is non-increasing on $\R_+$,
\item
$t\mapsto e^{(p-1) t}/f(e^t)$ is convex on $\R$.
\end{enumerate}
Then $\log u$ is strictly concave.
\end{corollary}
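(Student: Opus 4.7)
The plan follows the strategy outlined in the introduction, taking $\varphi=\log$. A direct computation shows that $v:=\log u$ satisfies the quasilinear equation
\[
-\Delta_p v = (p-1)|\nabla v|^p + \frac{1}{h(v)}, \qquad h(t):=\frac{e^{(p-1)t}}{f(e^t)},
\]
so hypothesis~(2) says exactly that $h$ is convex. This is the structural assumption that allows a concavity-type maximum principle to be run on the equation for $v$.

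The first step is to establish concavity of $v$. Using the Korevaar--Kennington concavity maximum principle adapted to the $p$-Laplacian (in the spirit of Sakaguchi and Kawohl), combined with the convexity of $h$ and the weak comparison principle of Lemma~\ref{wc} coming from hypothesis~(1), one checks that the concave envelope of $v$ is a subsolution of the same equation and therefore coincides with $v$. Hence $v$ is concave on $\Omega$, and in particular $u$ is quasi-concave. Since $f$ is locally Lipschitz and satisfies~(1), Theorem~\ref{Mth} then applies to $u$ and yields a single maximum point $x_0$, which is also the unique maximum of $v$.

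The crucial step is the upgrade to strict concavity, which is where the $C^2$ boundary assumption enters. On $\Omega\setminus\{x_0\}$ one has $\nabla u\neq 0$ by uniqueness, so the equation for $u$ is uniformly $p$-elliptic there; classical regularity (using $\partial\Omega\in C^2$ and $f\in C^{1,\alpha}_{\mathrm{loc}}$) gives $u\in C^{2,\alpha}(\overline\Omega\setminus\{x_0\})$, and hence $v\in C^{2,\alpha}(\Omega\setminus\{x_0\})$. The point is that the degeneracy of $\Delta_p$ at the critical set (which is precisely what breaks the usual constant-rank scheme globally) is now localised at the single point $x_0$, so a Bian--Guan-type microscopic convexity principle applies on the connected open set $\Omega\setminus\{x_0\}$: the convexity of $h$ supplies the required structural condition, giving that $\mathrm{rank}(D^2 v)$ is constant there. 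The boundary behaviour is then analysed via Hopf's lemma and the $C^2$ convex boundary, writing $u\sim c\,d$ near $\partial\Omega$ with $d(x)=\mathrm{dist}(x,\partial\Omega)$ and $c>0$: a computation in boundary-normal coordinates shows that $D^2 v$ has full rank $2$ in a tubular neighbourhood of $\partial\Omega$. The constant-rank principle then forces rank $2$ throughout $\Omega\setminus\{x_0\}$, i.e.\ strict concavity of $v$ there. Combined with global concavity and uniqueness of $x_0$, this extends to strict concavity on all of $\Omega$: an affine segment for $v$ would either avoid $x_0$ (contradicting strict concavity off $x_0$) or contain $x_0$ in its interior, in which case strict concavity on the two sub-segments would again contradict affinity.

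The main obstacle lies in this last step: one must verify that the Bian--Guan constant-rank machinery really applies to the specific quasilinear equation satisfied by $v=\log u$ (the structural check involving the simultaneous presence of the $|\nabla v|^p$ term and the $1/h(v)$ term is not fully routine), and perform the boundary rank computation of $D^2 v$ under the sole hypothesis that $\partial\Omega$ is $C^2$ convex, without the luxury of strong convexity. Step~1 also requires some care at points where $\nabla u$ vanishes, where a weak or viscosity formulation of the concavity principle must be used.
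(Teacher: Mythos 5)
Your overall skeleton (concavity of $v=\log u$, uniqueness of the critical point via Theorem \ref{Mth}, constant-rank on $\Omega\setminus\{x_0\}$, then full rank and strict concavity) matches the paper, but the step you yourself flag as the obstacle is a genuine gap, and it is exactly the point where the paper takes a different route. You propose to get full rank of $D^2v$ in a tubular neighbourhood of $\partial\Omega$ from Hopf's lemma and the expansion $u\sim c\,d$ in boundary-normal coordinates. This cannot work under the stated hypotheses: $\partial\Omega$ is only assumed convex and $C^2$, so it may contain flat segments, and near a flat segment $D^2d\equiv 0$, so the expansion of $D^2(\log u)$ produces at leading order only the rank-one term $-d^{-2}\,\nabla d\otimes\nabla d$; the tangential second derivatives are lower-order corrections not controlled by such a computation. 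A near-boundary full-rank statement is precisely what \cite[Lemma 2.4]{korevaar} gives, and it requires \emph{strong} convexity of $\partial\Omega$, which is deliberately not assumed here (indeed the whole point of the corollary is that the level sets come out strictly convex even when $\partial\Omega$ has flat parts). The paper closes the argument without any boundary computation: assuming $\det D^2v\equiv 0$ on $\Omega\setminus\{x_0\}$, the constant-rank conclusion plus the Hartman--Nirenberg theorem \cite{HN} makes the graph of $v$ developable, so through every point there is a segment on which $Dv$ is constant (the locally-constant-gradient alternative being excluded because the right-hand side of the equation for $v$ is strictly positive). Choosing $x_0'$ with $|Dv(x_0')|<\eps$ and $v(x_0')>\frac{M+m}{2}$, the segment through $x_0'$ must reach the level set $\{v=m\}$ within distance ${\rm diam}\,\Omega$, giving $\frac{M-m}{2}<\eps\,{\rm diam}\,\Omega$, a contradiction for small $\eps$. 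You need this (or an equivalent) device; as written, your proof does not establish full rank anywhere.

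Two further points. First, you misplace the role of the $C^2$ boundary: in the paper it is used only to guarantee uniqueness of the solution of \eqref{equation}, which is what makes the approximation scheme of \cite{BMS} (and hence the concavity of $\log u$) available; it plays no role in the strict-concavity upgrade. Second, your Step 1 is not a proof: the assertion that ``the concave envelope of $v$ is a subsolution of the same equation and therefore coincides with $v$'' is exactly the delicate content of the quasi-concavity literature for degenerate equations (note also that $v=\log u\to-\infty$ at $\partial\Omega$, so comparison with the envelope is not straightforward), and the paper simply invokes \cite[Theorem 1.1]{BMS} for it. The constant-rank structural check you worry about is carried out in the paper by writing the equation for $w=-v$ as $G(D^2w,Dw,w)=0$ with $G(X,\xi,t)=-\bigl(\Tr(A(\xi)X)\bigr)^{-1}+\frac{\psi'(-t)}{\psi''(-t)}\,b(\xi)$, using the convexity of $X\mapsto -\bigl(\Tr(AX^{-1})\bigr)^{-1}$ from the appendix of \cite{ALL} and the convexity of $t\mapsto\psi'(-t)/\psi''(-t)$, which is where hypothesis (2) enters; your final passage from strict concavity off $x_0$ to strict concavity on $\Omega$ is fine and essentially the paper's one-dimensional argument.
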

 
 \begin{remark}
 \label{r1}\ \vskip1pt
 \begin{itemize}
 \item
 The two required conditions on $f$ ensures that $\log u$ is concave by the results of \cite{BMS}, allowing to apply Theorem \ref{Mth} and deduce strict concavity via additional arguments outlined below and based on the constant rank principle.   
 \item
It is worth underlining that    {\em no strict convexity assumed on $\Omega$}, hence  the super-level sets of $u$ turn out to be  strictly convex, even if $\partial\Omega$ has flat parts.
\item
The regularity of $\partial\Omega$ is required only to ensure that \eqref{equation} has a unique solution under assumption {\em (1)}. Indeed, if this uniqueness property holds true, the approximation argument in \cite[Section 4.1]{BMS} runs through and all the results contained therein follow.  Uniqueness easily holds for the $p$-torsion function in any domain, thus the previous corollary holds true in any  bounded convex $\Omega\subseteq\R^2$.  For example, the $p$-torsion function of a square  has strictly convex positive super-level sets.
More generally, in \cite[Theorem  4.1]{BPZ},  uniqueness for problem \eqref{equation} in any domain has been proved for $f(t)=c\, t^{q-1}$ with $c>0$ and $1\le q<p$, ensuring that for this class of reactions the previous and next corollary hold true without any further assumption on  $\Omega$ beyond convexity and boundedness.
\end{itemize}
 \end{remark}
 
 In a similar manner one can proceed studying strict concavity of more general function $v$ arising as composition of $u$ via suitable transformations. In particular, given a reaction $f$ in \eqref{equation}, we define 
 \[
 F(t)=\int_0^tf(\tau)\, d\tau, 
 \]
 and 
 \be
 \label{defvarphi}
 \varphi(t)=\int_1^t\frac{1}{F^{1/p}(\tau)}\, d\tau.
 \ee
 In \cite{BMS} we studied the concavity of $\varphi(u)$ when $u$ solves \eqref{equation} and the results proved there, together with Theorem \ref{Mth}, provide the following. 
 
\begin{corollary}\label{scvarphi}
Let $u\in W^{1,p}_0(\Omega)$ solve  \eqref{equation} in a bounded convex domain $\Omega\subset\R^2$ with $C^2$ boundary, where $f\in C^0\big([0, +\infty), [0, +\infty)\big)\cap C^{1, \alpha}_{\rm loc}(\R_+, \R_+)$  for some $0<\alpha<1$ satisifes  
\begin{enumerate}
\item
$F^{1/p}$ is concave,
\item
$F/f$ is   convex.
\end{enumerate}
Then $\varphi(u)$ is strictly concave, where $\varphi$ is defined in \eqref{defvarphi}.
\end{corollary}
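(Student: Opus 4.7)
The plan is to combine the concavity of $\varphi(u)$ proved in \cite{BMS} with the uniqueness of the critical point from Theorem \ref{Mth}, and then to upgrade concavity to strict concavity via the microscopic convexity (constant rank) principle. Under assumptions $(1)$-$(2)$ and the $C^2$ regularity of $\partial\Omega$---which ensures uniqueness of $u$ and legitimises the approximation procedure of \cite[Section 4.1]{BMS}---the results of \cite{BMS} yield that $v:=\varphi(u)$ is concave on $\Omega$. The hypothesis of Theorem \ref{Mth} is in fact a consequence of $(1)$: concavity of $F^{1/p}$ together with $F(0)=0$ implies both $t\,f(t)\le pF(t)$ (since $F(t)/t^p$ is then non-increasing) and $pFf'\le (p-1)f^2$ (from $(F^{1/p})''\le 0$), and combining these two gives $tf'(t)\le (p-1)f(t)$, i.e., $f(t)/t^{p-1}$ is non-increasing on $\R_+$. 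Consequently $u$ admits a single critical point $x_0\in\Omega$, and so does $v$ since $\varphi$ is strictly increasing.

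Next, the constant rank theorem of \cite{bianguan, korlew} (in the version adapted to the nonlinearities arising in \cite{BMS}) is applied to $v$ on the connected open set $\Omega\setminus\{x_0\}$. Here $|\nabla u|>0$, so the quasi-linear equation satisfied by $v$ (obtained from \eqref{equation} through the substitution $v=\varphi(u)$) is uniformly elliptic on compact subsets, and standard elliptic regularity gives $v\in C^{2,\alpha}_{\rm loc}(\Omega\setminus\{x_0\})$; assumptions $(1)$-$(2)$ translate precisely into the convexity-type conditions on the transformed nonlinearity required by the microscopic convexity principle. Hence $D^2v$ has constant rank $r\in\{0,1,2\}$ throughout $\Omega\setminus\{x_0\}$. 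Since $\Omega$ is bounded, convex and of class $C^2$, its boundary must contain a point of strictly positive curvature; a Hopf-type analysis at such a point for the equation solved by $v$ shows that $D^2v$ is negative definite in an interior neighbourhood, forcing $r=2$ and hence $D^2v<0$ on all of $\Omega\setminus\{x_0\}$.

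Finally, strict concavity of $v$ on all of $\Omega$ follows by a short direct argument: were $v$ not strictly concave, concavity would force it to be affine on some nontrivial segment $[x,y]\subset\Omega$, but any such segment contains an open subsegment lying in $\Omega\setminus\{x_0\}$ on which $D^2v$ would vanish in the tangent direction, contradicting the negative definiteness just established. The step I expect to be most delicate is pinning down $r=2$: since $\Omega$ is only assumed convex and $\partial\Omega$ may contain flat arcs, one has to carefully select a boundary point of strictly positive curvature and carry out there the boundary computation for the fully nonlinear transformed equation, while also ensuring that the constant rank machinery is applied only where the equation is genuinely elliptic, i.e., strictly away from the unique critical point $x_0$.
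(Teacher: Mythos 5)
Your proposal follows the paper's skeleton up to a point: concavity of $v=\varphi(u)$ from \cite{BMS}, uniqueness of the critical point $x_0$ from Theorem \ref{Mth} (your verification that assumption (1) implies the monotonicity of $t\mapsto f(t)/t^{p-1}$ is a correct and welcome detail), the constant rank theorem of \cite{bianguan,SW} applied on $\Omega\setminus\{x_0\}$ where the transformed equation is elliptic, and the elementary passage from a rank-two Hessian to strict concavity at the end (your segment argument and the paper's one-dimensional argument with $g(t)=v(tx+(1-t)y)$ are equivalent). The divergence, and the gap, is in the step you yourself flag as delicate: showing that the constant rank is $2$.

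Your route --- pick a boundary point of strictly positive curvature and run a Hopf-type computation to get $D^2v$ negative definite at nearby interior points --- is precisely the classical argument that the paper says works "readily" only when $\partial\Omega$ is smooth and strongly convex (via \cite[Lemma 2.4]{korevaar}), and under the corollary's actual hypotheses it is not justified as stated. Two concrete obstructions: first, $f$ is only continuous at $0$ and $\partial\Omega$ is only $C^2$, so $u$ is merely $C^{1,\beta}(\overline\Omega)$ near the boundary; the identity $u_{\tau\tau}=-\kappa\,u_\nu$ at the boundary point and its propagation to nearby level lines require second derivatives continuous up to $\partial\Omega$, which is not available. Second, $\varphi$ is in general singular at $0$ (e.g.\ $\varphi(0^+)=-\infty$ when $F^{-1/p}$ is not integrable at $0$), so $v$ and its derivatives can blow up at $\partial\Omega$ and "negative definite in an interior neighbourhood" needs a quantitative interior computation controlling the competition between $(\varphi')^2\det D^2u$ and $\varphi'\varphi''|\nabla u|^2u_{\tau\tau}$, i.e.\ essentially a re-proof of Korevaar's lemma under weaker regularity and without strong convexity; none of this is carried out. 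The paper avoids the boundary entirely: it argues by contradiction assuming $\det D^2v\equiv 0$ on $\Omega\setminus\{x_0\}$, invokes the Hartman--Nirenberg developability theorem \cite{HN} to produce, through any point, a segment on which $Dv$ is constant (the alternative that $Dv$ is locally constant being excluded because the right-hand side of the equation for $v$ is strictly positive), and then picks a point with $|Dv|<\eps$ and value above $(M+m)/2$: the segment must reach the level set $\{v=m\}$ within distance ${\rm diam}\,\Omega$, which is incompatible with $|Dv|<\eps$ for small $\eps$. This purely interior argument is what makes the corollary work for $C^2$ (and, whenever uniqueness of the solution holds, arbitrary) convex domains with possibly flat boundary parts, which your boundary-based route cannot deliver without substantial extra work.
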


For a discussion on the relations between the two sets of assumption in the previous corollaries we refer to \cite{BMS}, where also some examples of nonlinearities fulfilling them are given.  Remark \ref{r1} holds for this last statement as well.

\subsection{Sketch of proof}
The proof of Theorem \ref{Mth} relies on Aleksandrov's reflection method. The set ${\rm Argmax}\, (u)$  is a closed convex set with empty interior since $f$ is strictly positive, therefore we must exclude that it is a segment. Arguing by contradiction, we suppose ${\rm Argmax}\, (u)$ is a segment and consider the super-level sets 
\[
\K_\eps=\big\{u> \max_\Omega u-\eps\big\}.
\]
 Our aim is to find a straight cut  of one of the $\K_\eps$'s such that one of the resulting parts  of $\K_\eps$ (called {\em caps} in the following) 
\begin{enumerate}
\item[{\em a)}]
can be reflected around the cut, staying in $\K_\eps$;
\item[{\em b)}]
intersects ${\rm Argmax}(u)$ in a segment of positive length.
 \end{enumerate}
 As long as these two properties are met, the contradiction is found via the strong comparison principle applied to $u$ and its reflection around the cut. The idea to find caps obeying {\em a)} above is by now classical and permits the localisation of various important points related to semilinear problems, (see \cite{BM0} and the literature therein ). It is the simultaneous requirement of {\em a)} and {\em b)} above that is quite tricky to be fulfilled. 
 
 Let's agree to call the caps fulfilling {\em a)} above {\em foldable}, with their {\em width} being the maximum distance of the cap from the cut. Cutting out from a convex set {\em all} its foldable caps, one obtains the so-called {\em heart} of the convex, (see \cite{BM} for some of its properties). 
 
 Back to the proof of {\em a)} and {\em b)}, we first observe that, since $\K_\eps\to {\rm Argmax}\, (u)$ in Hausdorff distance,  {\em b)} is fulfilled as long as $\K_\eps$ has a foldable cap having width uniformly bounded from below by a positive constant, as $\eps\downarrow 0$.  Since $\K_\eps$ converges to a segment, whose heart  is its midpoint, it is reasonable to expect that the heart of $\K_\eps$, as $\eps\downarrow 0$,  will be small compared to its diameter, ensuring the existence  of  foldable cap of large width for sufficiently small $\eps$. Unfortunately, the heart operator is far from being continuous and this argument fails. However, {\em in two dimensions}, any convex set possesses cuts on which the convex set projects itself. We use one of these cuts to construct a foldable cap of $\K_\eps$ with width  comparable to $1/4$ of $\K_\eps$'s diameter (see Lemma \ref{fold} for a precise statement). This provides us with the cap obeying {\em a)} and {\em b)} above, for small $\eps$.
 
 \medskip
 Then, we face an additional difficulty. The strong comparison principle (needed to apply Alexandrov reflection method) for the $p$-Laplacian operator is a delicate matter when the two involved functions have vanishing gradients at the contact points. Indeed, at those point the equation loses ellipticity and the proof of the strong comparison principle relies on quite involved techniques. At present, see \cite{DS}, it is known to hold for the $p$-Laplacian in $\R^2$ (under additional conditions met in our framework)  only for  $p>3/2$.
To deal with the full range $p>1$, we rely of somewhat softer methods, namely
\begin{enumerate}
\item[{\em c)}] the weak comparison principle, ensuring that  $u$ is less than or equal than its reflection around the cut;
\item[{\em d)}]
the strong comparison principle of \cite{damascelli}, under the assumption that the contact point between the compared functions is not critical for both.
\end{enumerate}
The weak comparison principle will do the trick as long as the cut obtained above is not orthogonal to ${\rm Argmax}\, (u)$, since in this case $u$ and its reflection attain the same maximum on different points. Unfortunately, it can actually happen that all foldable caps of $\K_\eps$ intersecting ${\rm Argmax}\, (u)$ arise from cuts orthogonal to the latter. But, if this happens for all $\K_\eps$, the solution turns out  be   one dimensional locally near the cut. In this case $u$ and its reflection coincide on an open set and {\em d)} above  allows to conclude.

\medskip

Let us make one final remark regarding the limits of our proof of Theorem \ref{Mth}. Regardless of the issues related to the strong comparison principle, the main point where the two dimensions play a r\^ole is in finding the cap obeying {\em a)} and {\em b)} above. In the appendix of this manuscript we will construct a sequence of tetrahedra in $\R^3$ converging in the Hausdorff sense to a segment, such that all their foldable caps are disjoint from the limit segment, see Example \ref{exapp}. Therefore, different arguments are likely needed to deal with the corresponding higher dimensional result.

\medskip

The proof of the corollaries  follows as already mentioned in the first point of Remark \ref{r1} when the domain $\Omega$ is strongly convex. A more refined  argument is needed to treat general convex domains and we also want to  avoid any  argument relying on the  regularity of $\partial\Omega$. In this way, whenever uniqueness for problem \eqref{equation} holds true, the statements of the corollaries still stand, as noted in the last point in Remark \ref{r1}.

By the results in \cite{BMS}, a suitable transformation $v=\varphi\circ u$ (with $\varphi$ increasing) is concave, thus $u$ is quasi-concave and Theorem \ref{Mth} applies, giving uniqueness of the critical point $x_{\rm max}\in \Omega$. The constant rank principle applies in  $\Omega\setminus\{x_{\rm max}\}$, where the equation is smooth and elliptic, ensuring that $D^2v$ has constant rank there.  If $\partial\Omega$ is smooth and strongly convex one readily concludes, since $v$ has full rank near $\partial\Omega$ thanks to \cite[Lemma 2.4]{korevaar}. In the general case (which covers arbitrary convex bodies) we proceed by contradiction, assuming that ${\rm det}\, D^2v\equiv 0$  in $\Omega\setminus \{x_{\rm max}\}$. This means that the graph of $v$  is developable there and, by a classical result of Hartman and Nirenberg  \cite{HN},  any point in $\Omega\setminus\{x_{\rm max}\}$ has a segment  going through it on which $Dv$ is constant. In particular, points of arbitrary small gradient can be joined to $\partial\Omega$ through such a segment, leading to  a contradiction since $\partial\Omega$ and $x_{\rm max}$ are at most ${\rm diam}\, (\Omega)$ distance apart. It follows that $D^2v$ is of full rank everywhere in $\Omega\setminus\{x_{\rm max}\}$, implying strict concavity by elementary means.

 \subsection{Outline of the paper}
 In Section \ref{sec2} we collect some preliminary results, stating two comparison principles for solutions to \eqref{equation} and proving some results about convex sets in the plane.  In Section \ref{sec:proof} we prove Theorem \ref{Mth} and Corollary  \ref{scvarphi}, while we omit the proof of Corollary \ref{sclog}  since it follows along the same lines. We conclude with the Appendix where, as already mentioned, we construct a counterexample showing that the applicability of the previously discussed method is limited to the two-dimensional case.
 
 \medskip

{\em Notations:}
In the following $\K$ will always denote a bounded closed convex subset of $\R^N$ and $\omega\in {\S}^{N-1}$ a unit vector in $\R^N$. By ${\mathcal S}_+$ we denote the cone of positive definite $2\times 2$ matrices. By $\langle x, y\rangle$ we denote the scalar product of the vectors $x, y\in \R^N$. The symbol $[x, y]$ stands for the convex envelope of $\{x, y\}$, i.\,e.\,the segment having $x$ and $y$ as extrema. We will write $[x, y]\parallel \omega$ if the line through $x$ and $y$ has direction $\omega$. If $v=(v_1, v_2)$ is a vector in the plane $v^\bot$ will denote any of the vectors $(v_2, -v_1)$ or $(-v_2, v_1)$ orthogonal to $v$ and having same length.

\bigskip 
\medskip
\noindent
{\bf Acknowledgements.}
The authors are members of {\em Gruppo Nazionale per l'Analisi Ma\-te\-ma\-ti\-ca, la Probabilit\`a e le loro Applicazioni} (GNAMPA).
S. Mosconi is partially supported by project PIACERI - Linea 2 and 3 of the University of Catania.
  Part  of  the  paper  was  developed  during  a  visit  of the  second  author  at  the  Department  of  Mathematics  and  Physics of the Catholic University  of Sacred Hearth, Brescia, Italy.  
  The  hosting  institution  is  gratefully  acknowledged.

 \section{Preliminaries}\label{sec2}
 \subsection{Comparison principles}
 The following comparison principle is essentially contained in \cite{diaz-saa}. If both compared functions are positive on $\overline\Omega$ (which suffices for our purposes), its proof is particularly simple and we provide it for completeness.
 
 \begin{lemma}\label{wc}
 Let $f\in C^0(\R_+)$ and suppose that $u_1, u_2\in C^1(\Omega)\cap C^0(\overline\Omega)$ are positive in $\overline\Omega$ and solve\eqref{equation}
 in $\Omega$ such that $u_1\ge u_2>0$   on $\partial\Omega$. If $t\mapsto f(t)/t^{p-1}$ is non-increasing on $\R_+$, then $u_1\ge u_2$ in $\Omega$.
 \end{lemma}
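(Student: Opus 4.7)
The plan is to implement the classical Diaz--Saa type argument via a Picone/Young-inequality manipulation, exploiting the fact that both $u_1$ and $u_2$ are bounded away from $0$ on $\overline\Omega$, which makes the test functions involved completely harmless.

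First I would introduce the contact set
\[
\Omega_+:=\{x\in\Omega:\, u_2(x)>u_1(x)\}.
\]
Since $u_1\ge u_2$ on $\partial\Omega$ and $u_1,u_2\in C^0(\overline\Omega)$, the closure $\overline{\Omega_+}$ is a compact subset of $\Omega$. Moreover, since $u_i>0$ on all of $\overline\Omega$, the quantities $1/u_i^{p-1}$ are of class $C^1$ on a neighborhood of $\overline{\Omega_+}$. This allows me to employ
\[
\psi_i:=\frac{(u_2^p-u_1^p)^+}{u_i^{p-1}},\qquad i=1,2,
\]
as Lipschitz test functions compactly supported in $\Omega$; in particular, no special integrability or truncation argument is required.

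Next I would plug $\psi_i$ into the weak formulation of \eqref{equation} for $u_i$ and subtract. On the right-hand side one obtains, after cancellation,
\[
\int_{\Omega_+}\left(\frac{f(u_1)}{u_1^{p-1}}-\frac{f(u_2)}{u_2^{p-1}}\right)(u_2^p-u_1^p)\,dx,
\]
which is non-negative since $u_2>u_1>0$ on $\Omega_+$ and $t\mapsto f(t)/t^{p-1}$ is non-increasing. On the left-hand side, after expanding $\nabla\psi_i$ on $\Omega_+$ in terms of $\nabla u_1$, $\nabla u_2$ and $r:=u_2/u_1>1$, one gets a pointwise expression involving the mixed terms $|\nabla u_1|^{p-2}\nabla u_1\cdot\nabla u_2$ and $|\nabla u_2|^{p-2}\nabla u_2\cdot\nabla u_1$. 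Two applications of Young's inequality (equivalently, two applications of the Picone identity), with the parameters $\lambda=r$ and $\mu=1/r$ chosen so that equality holds iff $\nabla u_2=r\,\nabla u_1$, show that this integrand is non-positive.

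Consequently both sides vanish, and equality in Young forces $\nabla u_2=r\nabla u_1$ a.\,e.\ on $\Omega_+$, i.\,e.\ $\nabla(u_2/u_1)\equiv 0$ on each connected component of $\Omega_+$. Hence $u_2/u_1$ is constant on each component, and by continuity up to the relative boundary inside $\Omega$ (where $u_2=u_1$) this constant equals $1$, contradicting $u_2>u_1$ on $\Omega_+$ unless $\Omega_+=\emptyset$. The main subtlety here is purely algebraic, namely carefully tracking signs so that Young's inequality applied to the two mixed terms exactly matches the coefficients appearing after differentiating $\psi_1$ and $\psi_2$; positivity of both $u_i$ on $\overline\Omega$ eliminates all the usual analytic obstacles.
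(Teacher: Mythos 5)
Your argument is essentially the paper's own proof: the same Diaz--Saa/Picone scheme with the test functions $(u_2^p-u_1^p)_+/u_i^{p-1}$, the monotonicity of $t\mapsto f(t)/t^{p-1}$ giving the sign of the reaction term, equality in Picone/Young forcing $u_2/u_1$ to be constant on each component of $\{u_2>u_1\}$, and the boundary values giving the contradiction. Only a minor caveat: since only $u_1\ge u_2$ (not strict) is assumed on $\partial\Omega$, the set $\overline{\{u_2>u_1\}}$ need not be a compact subset of $\Omega$, but this is harmless because $(u_2^p-u_1^p)_+/u_i^{p-1}$ still vanishes continuously on the relevant boundary and lies in $W^{1,p}_0$, exactly as in the paper.
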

 
 \begin{proof}
 Suppose by contradiction that there exists a  nonempty connected component $\Omega_0$ of $\{x\in \Omega:u_2(x)>u_1(x)\}$.  By the continuity of the $u_i$ in $\overline\Omega$ and and the assumption $u_1\ge u_2$ on $\partial\Omega$, it holds $u_1=u_2$ on $\partial\Omega_0$.
 
 Recall that the following Picone inequality
 \begin{equation}
\label{picone}
|\nabla v|^{p-2}\nabla v\cdot \nabla \frac{w^p}{v^{p-1}}\le |\nabla w|^p,
\end{equation}
valid for any positive $v, w\in C^1$, becomes an equality in a connected set if and only if $v=k\, w$, with $k>0$. 
Using \eqref{picone} for $v=u_i$ and $w=u_j$ for $i\ne j$, we get
\be
\label{piconei}
|\nabla u_i|^{p-2}\nabla u_i\cdot \nabla \frac{u_j^p}{u_i^{p-1}}\le |\nabla u_j|^p=|\nabla u_j|^{p-2}\nabla u_j\cdot \nabla \frac{u_j^p}{u_j^{p-1}}.
\ee
 We sum the previous two inequalities and rearrange to get
\be
\label{kqp0}
|\nabla u_1|^{p-2}\nabla u_1\cdot \nabla \frac{\varphi}{u_1^{p-1}}\le |\nabla u_2|^{p-2}\nabla u_2\cdot \nabla \frac{\varphi}{u_2^{p-1}}
\ee
where $\varphi=(u_2^p-u_1^p)_+$.
We integrate over $\Omega_0$ and notice that $\varphi/u_i^{p-1}\in W^{1,p}_0(\Omega_0)$, thanks to the positivity and regularity of the $u_i$'s. Using  equation \eqref{equation}, we get
\[
\begin{split}
\int_{\Omega_0}\frac{f(u_1)}{u_1^{p-1}}\, \big(u_2^{p}-u_1^{p}\big)\, dx& =\int_{\Omega_0}|\nabla u_1|^{p-2}\nabla u_1\cdot \nabla \frac{\varphi}{u_1^{p-1}}\, dx\\
&\le \int_{\Omega_0} |\nabla u_2|^{p-2}\nabla u_2\cdot \nabla \frac{\varphi}{u_2^{p-1}}\, dx= \int_{\Omega_0}\frac{f(u_2)}{u_2^{p-1}}\, \big(u_2^{p}-u_1^{p}\big)\, dx
\end{split}
\]
so that
\be
\label{kqp}
\int_{\Omega_0}\left(\frac{f(u_1)}{u_1^{p-1}}-\frac{f(u_2)}{u_2^{p-1}}\right) \big(u_2^{p}-u_1^{p}\big)\, dx \le 0
\ee
The monotonicity assumption on $f$ ensures that the integrand is non-negative, hence it vanishes identically in $\Omega_0$. It follows that none of the inequalities in \eqref{piconei} can be strict on an open subset of $\Omega_0$, for otherwise the inequality in  \eqref{kqp0} would be strict there and the left hand side of \eqref{kqp} would be negative. Thus equality is attained in \eqref{piconei}, forcing $u_2=k\, u_1$ in $\Omega_0$. Since $u_2>u_1>0$ in $\Omega_0$,  we have  $k>1$, and since the $u_i$ are positive in $\overline\Omega$, it follows that $u_2>u_1$ on $\partial\Omega_0$. This contradiction implies $\Omega_0=\emptyset$, proving the claim.
 \end{proof}

 The following strong comparison principle is taken from \cite[Theorem 1.4 and Remark 1.4]{damascelli}.
 
 \begin{proposition}\label{sc}
 Suppose that $u_1, u_2\in C^1(\Omega)$ solve \eqref{equation} with $f$ being Lipschitz on the image of $u_1$ and $u_2$. Suppose that $u_1\le u_2$ in $\Omega$ and $u_1(x_0)=u_2(x_0)$ for some $x_0\in \Omega\setminus Z$, where 
 \[
 Z=\big\{x\in \Omega:\nabla u_1(x)=\nabla u_2(x)=0\big\}.
 \]
 Then $u_1=u_2$ in the connected component of $\Omega\setminus Z$ containing $x_0$.
 \end{proposition}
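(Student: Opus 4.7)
The plan is to reduce Proposition \ref{sc} to the classical linear strong maximum principle, applied to the non-negative difference $w:=u_2-u_1$. This is a standard strategy when both gradients are bounded away from zero; the delicate feature here is that only the set $Z$ where \emph{both} gradients vanish simultaneously is excluded, so one must allow points where just one of the $\nabla u_i$ is zero.

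First, I would linearize. Setting $\mathbf{a}(\xi):=|\xi|^{p-2}\xi$ and using the fundamental theorem of calculus,
\[
\mathbf{a}(\nabla u_2)-\mathbf{a}(\nabla u_1) = A(x)\,\nabla w, \qquad A(x) := \int_0^1 D\mathbf{a}\bigl((1-t)\nabla u_1(x)+t\nabla u_2(x)\bigr)\, dt,
\]
and combining this with the Lipschitz regularity of $f$ to write $f(u_2)-f(u_1)=c(x)\,w$ for some $c\in L^\infty_{\rm loc}(\Omega)$, I obtain that $w\ge 0$ solves the linear elliptic equation $-\Div(A(x)\nabla w)+c(x)\,w=0$ in $\Omega\setminus Z$, in the distributional sense.

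Second, I would verify that $A$ is locally uniformly positive definite on $\Omega\setminus Z$. Since the eigenvalues of $D\mathbf{a}(\xi)$ are $|\xi|^{p-2}$ and $(p-1)|\xi|^{p-2}$, for any unit vector $\nu\in\S^1$ the bilinear form $\langle A(x)\nu,\nu\rangle$ is comparable, up to constants depending on $p$, to the scalar integral $\int_0^1\bigl|(1-t)\nabla u_1(x)+t\nabla u_2(x)\bigr|^{p-2}\,dt$. Because $p>1$ makes $s\mapsto s^{p-2}$ integrable near $0$, this integral is finite and strictly positive at every $x\in\Omega\setminus Z$; a continuity argument then upgrades this to uniform comparability on any compact subset $K\subset\Omega\setminus Z$. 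At that point the classical strong maximum principle for linear uniformly elliptic operators with bounded zero-order coefficient (of Gilbarg--Trudinger type) yields $w\equiv 0$ on a ball $B_r(x_0)\subset\Omega\setminus Z$, and a chain-of-balls argument along an arbitrary path inside the connected component of $\Omega\setminus Z$ through $x_0$ propagates the vanishing of $w$ throughout that component.

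The main obstacle is exactly the uniform ellipticity check for $A$ near points where only one of $\nabla u_i$ vanishes, or where the two gradients are anti-parallel with different magnitudes, so that the segment $t\mapsto(1-t)\nabla u_1+t\nabla u_2$ crosses the origin at some $t_0\in(0,1)$: there the integrand either degenerates (for $p>2$) or blows up (for $p<2$), and it is precisely the hypothesis $p>1$ that keeps the integral finite and the local ellipticity constants under control. This is the technically subtle point of Damascelli's argument in \cite{damascelli}, whose Remark 1.4 handles the exact setting needed here.
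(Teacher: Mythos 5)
The paper offers no proof of Proposition \ref{sc}: it is quoted verbatim from \cite{damascelli} (Theorem 1.4 and Remark 1.4), so there is nothing internal to compare against. Your sketch is a correct outline of the standard linearization argument that underlies the cited result in the regime relevant here, namely away from the set $Z$ where \emph{both} gradients vanish: writing $\mathbf{a}(\xi)=|\xi|^{p-2}\xi$, the difference $\mathbf{a}(\nabla u_2)-\mathbf{a}(\nabla u_1)=A(x)\nabla w$ with $A$ given by the averaged Jacobian, noting $f(u_2)-f(u_1)=c(x)w$ with $c$ locally bounded by the Lipschitz hypothesis, and then invoking the strong maximum principle (via the weak Harnack inequality of De Giorgi--Nash--Moser type, which only needs bounded measurable, locally uniformly elliptic $A$) for the non-negative supersolution $w$ of $-\Div(A\nabla w)+Lw\ge 0$, $L$ being the Lipschitz constant of $f$; a chain of balls then propagates $w\equiv 0$ through the connected component of $\Omega\setminus Z$ containing $x_0$. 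Two points deserve to be made explicit if you were to write this out in full: (i) for $1<p<2$ the fundamental theorem of calculus for $\mathbf{a}$ along a segment crossing the origin needs the observation that $t\mapsto \mathbf{a}\bigl((1-t)\nabla u_1+t\nabla u_2\bigr)$ is absolutely continuous because the singularity $|t-t_0|^{p-2}$ is integrable for $p>1$; (ii) the locally uniform two-sided bound on $\int_0^1|(1-t)\nabla u_1+t\nabla u_2|^{p-2}\,dt$ over a compact $K\subset\Omega\setminus Z$ follows from the lower bound $\max\{|\nabla u_1|,|\nabla u_2|\}\ge\delta>0$ on $K$ together with the elementary estimate $|(1-t)\nabla u_1+t\nabla u_2|\ge |t-t_*|\,|\nabla u_2-\nabla u_1|$, which controls the singular case where the segment passes near the origin; continuity of $A$ is not needed. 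With these details supplied, your argument is a self-contained proof of the proposition as stated, whereas Damascelli's theorem is more general (his weighted approach also handles degeneracy up to parts of the critical set), which is why the paper simply cites it.
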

 
 \subsection{Convex geometry}

 Given a  convex $\K\subseteq \R^N$,  its {\em support function} $\H_\K:\S^{N-1}\to \R$ is 
\[
\H_\K(\omega)=\sup\big\{\langle x, \omega\rangle:x\in \K\big\}.
\]
 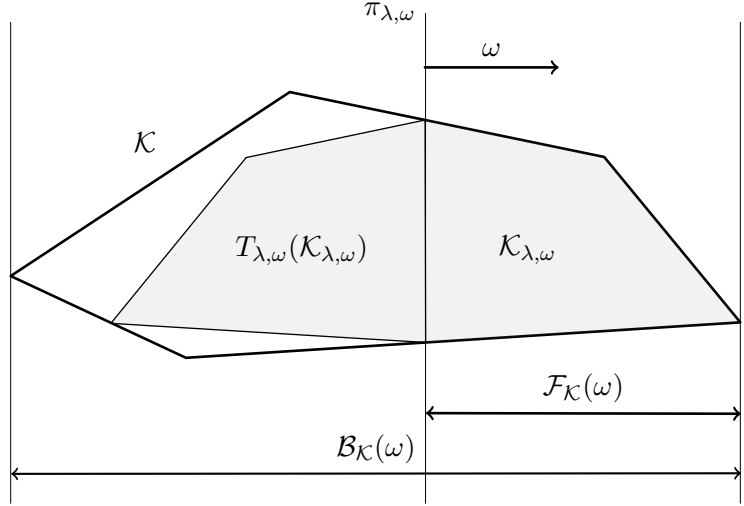
\begin{figure}[h]
 \begin{tikzpicture}[line cap=round,line join=round,x=0.5cm,y=0.4cm]
\draw[line width=1pt] (-8.57,-0.44) -- (-1.15,5.68) -- (7.21,3.52) -- (10.83,-1.98) -- (-3.91,-3.16) -- cycle;
\fill[fill=gray,fill opacity=0.1] (-5.907098987259063,-2.013407383208102) -- (-2.3090838320013387,3.5009998326706557) -- (2.447983278390904,4.750377526157375) -- (2.4627541592651974,-2.6498337918634376) -- cycle;
\fill[fill=gray,fill opacity=0.1] (2.447983278390904,4.750377526157375) -- (7.21,3.52) -- (10.83,-1.98) -- (2.4627541592651974,-2.6498337918634376) -- cycle;
\draw[very thin]  (-8.57, -8)--(-8.57, 8)  (10.83, -8)--(10.83, 8)  (2.46, -8)--(2.46, 8.3) node[left]{$\pi_{\lambda, \omega}$} ;
\draw (-5, 4) node{$\mathcal{K}$};
\draw [line width=0.5pt] (-5.907098987259063,-2.013407383208102)-- (-2.3090838320013387,3.5009998326706557);
\draw [line width=0.5pt] (-2.3090838320013387,3.5009998326706557)-- (2.447983278390904,4.750377526157375);
\draw [line width=0.5pt] (2.447983278390904,4.750377526157375)-- (2.4627541592651974,-2.6498337918634376);
\draw [line width=0.5pt] (2.4627541592651974,-2.6498337918634376)-- (-5.907098987259063,-2.013407383208102);
\draw [line width=1pt, ->] (2.443493438299297,6.5 )-- node[midway, above]{$\omega$} (6,6.5);
\draw [line width=1pt, <->] (2.4674451199592036,-5.000005099560959)-- node[midway, above]{$\mathcal{F}_{\mathcal{K}}(\omega)$}(10.847564879961116,-5.020004860519039);
\draw [thick, <->] (-8.556866160429001,-7.020053625070716)-- node[midway, above]{$\mathcal{B}_{\mathcal{K}}(\omega)$} (10.851517557629023,-7.000296372140461) ;
\draw (5.17,0.43) node {$\mathcal{K}_{\lambda, \omega}$};
\draw[color=black] (-0.83,0.43) node {$T_{\lambda, \omega}(\mathcal{K}_{\lambda, \omega})$};
 
\end{tikzpicture}
\caption{Some quantities defined for a convex set}
\end{figure}
If  $\K$ is bounded, $\H_\K$ turns out to be continuous. The {\em breadth } $\B_{\K}:\S^{N-1}\to \R$ of $\K$ is
\[
\B_\K(\omega)=\H_\K(\omega)+\H_\K(-\omega)
\]
i.\,e.\, it is the minimal distance between two parallel supporting hyperplanes of $\K$ having normal vector $\omega$.  For any $\omega\in \S^{N-1}$ both $\K\mapsto \H_{\mathcal K}(\omega)$ and $\K\mapsto \B_\K(\omega)$ are continuous with respect to Hausdorff convergence  in the class of convex subsets of a bounded set.
The {\em width} of $\K$ is  its minimal breadth, i.\,e.
 \[
{\rm width}\, (\K)=\inf\left\{\B_\K(\omega):\omega\in \S^{N-1}\right\}\, .
 \]
A {\em section} of $\K$ is  its intersection with an hyperplane; a {\em shadow of $\K$ in direction $\omega$} is the image of $\K$ under an orthogonal projection on a hyperplane having normal vector $\omega\in \S^{N-1}$. Given an hyperplane with equation
\[
\pi_{\lambda, \omega}=\left\{x\in \R^N:\langle x, \omega\rangle=\lambda\right\},
\]  
we denote by  $T_{\lambda, \omega}$ the reflection on  $\pi_{\lambda, \omega}$, i.\,e.
\be
\label{reflection}
T_{\lambda, \omega}(x)=x-2\, \omega\, \big( \langle \omega, x\rangle- \lambda\big)
\ee
 and we define the corresponding {\em cap} of $\K$ as
\[
\K_{\lambda, \omega}=\left\{x\in \K:\langle x, \omega\rangle\ge \lambda\right\},
\]
i.\,e.\,$\K_{\lambda, \omega}$ is the part of $\K$ above $\pi_{\lambda, \omega}$ (in the direction $\omega$). The {\em maximal folding cap} of $\K$ in direction $\omega$ is defined as 
\[
\K_\omega=\bigcup\left\{\K_{\lambda, \omega}: T_{\lambda, \omega}(\K_{\lambda, \omega})\subseteq \K\right\}.
\]
 Finally, the {\em maximal folding height} $\F_\K:{\mathbb S}^{N-1}\to [0, +\infty)$ of $\K$ is 
\[
\F_\K(\omega)= \B_{\K_\omega}(\omega).
\]
By \cite[Lemma 2.1]{BM} and the representation
\[
\F_\K(\omega)=\H_{\mathcal K}(\omega)-\min\{\lambda\in \R:T_{\lambda, \omega}(\K_{\lambda, \omega})\subseteq \K\}
\]
the maximal folding height is upper semicontinuous.
  
  \begin{lemma}\label{fold}
Let $\K\subseteq\R^N$ be convex and such that $\pi_{\lambda, \omega}\cap \K$  is a shadow of $\K$. Then 
  \be
  \label{eqlem1}
\max\left\{\F_\K(\omega), \F_\K(-\omega)\right\}\ge \frac{1}{4} \, \B_\K(\omega)\, .
  \ee
  In particular, if 
  \be
  \label{media}
  \lambda\le \frac{\H_\K(\omega)-\H_\K(-\omega)}{2}, 
  \ee
  then  
  \[
  T_{\mu, \omega}(\K_{\mu, \omega})\subseteq \K\quad \text{ for}\quad  \mu:= \H_\K(\omega)-\frac{1}{4}\, \B_\K(\omega)\ge   \lambda+\frac{1}{4}\, \B_{\K}(\omega).
  \]
  \end{lemma}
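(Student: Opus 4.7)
The plan is to prove the ``In particular'' statement first, and then deduce \eqref{eqlem1} from it by a simple symmetry argument. Fix coordinates with $\omega=e_N$ and write $b=\H_\K(\omega)$, $a=\H_\K(-\omega)$, so that $\B_\K(\omega)=a+b$ and $\K$ sits in the slab $\{-a\le x_N\le b\}$. The hypothesis that $\pi_{\lambda,\omega}\cap\K$ is a shadow of $\K$ forces the projection direction to be (up to sign) $\omega$, since the shadow lies in a hyperplane with normal $\omega$. Unpacking the definition, the orthogonal projection
\[
P(x)=x-(\langle x,\omega\rangle-\lambda)\,\omega
\]
sends $\K$ into $\K\cap\pi_{\lambda,\omega}$; in particular $P(x)\in\K$ for every $x\in\K$, which geometrically means that $\K$ is contained in the vertical cylinder over its section $\K\cap\pi_{\lambda,\omega}$.

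Assuming now the midplane condition \eqref{media}, set $\mu=b-\tfrac14\B_\K(\omega)=(3b-a)/4$. A direct computation gives $\mu-\lambda\ge\tfrac14(a+b)$ (this is precisely \eqref{media} rewritten) and $2\mu-b=(b-a)/2\ge\lambda$. Pick any $x\in\K_{\mu,\omega}$ and write $h:=\langle x,\omega\rangle\in[\mu,b]$, $y:=P(x)\in\K$. The three points $x$, $T_{\mu,\omega}(x)$ and $y$ share the same component perpendicular to $\omega$ and lie at heights $h$, $2\mu-h$ and $\lambda$ respectively, on a common line parallel to $\omega$. Since $\mu\le h\le b$, we have
\[
\lambda\le 2\mu-b\le 2\mu-h\le\mu\le h,
\]
so $T_{\mu,\omega}(x)$ belongs to the segment $[x,y]$. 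Convexity of $\K$ then yields $T_{\mu,\omega}(x)\in\K$, proving $T_{\mu,\omega}(\K_{\mu,\omega})\subseteq\K$ and hence $\F_\K(\omega)\ge\B_{\K_{\mu,\omega}}(\omega)=b-\mu=\tfrac14\B_\K(\omega)$.

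To derive \eqref{eqlem1} without assuming \eqref{media}, observe that at least one of $\lambda\le(b-a)/2$ or $-\lambda\le(a-b)/2$ holds. In the first case the previous step applies verbatim. In the second, $\pi_{\lambda,\omega}=\pi_{-\lambda,-\omega}$ is a shadow of $\K$ in direction $-\omega$, the analogous midplane condition holds, and the same argument applied with $-\omega$ in place of $\omega$ gives $\F_\K(-\omega)\ge\tfrac14\B_\K(-\omega)=\tfrac14\B_\K(\omega)$. The entire argument is elementary; the only delicate point is the chain of height inequalities above, which relies essentially on two facts: the shadow sits below the midplane (i.e.\ \eqref{media}), and the reflection level $\mu$ is chosen exactly one quarter-breadth below the top, so that the reflected cap does not protrude past $\pi_{\lambda,\omega}$.
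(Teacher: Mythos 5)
Your proof is correct and takes essentially the same route as the paper's: the shadow hypothesis puts the segment $[x,\Pi_{\lambda,\omega}(x)]$ inside $\K$, and folding at a level at least halfway between $\lambda$ and $\H_\K(\omega)$ keeps the reflected point on that segment, hence in $\K$. The only difference is bookkeeping: the paper folds at $\lambda_1=\tfrac{\H_\K(\omega)+\lambda}{2}$ (and symmetrically at $-\lambda_2$) and deduces \eqref{eqlem1} by comparing the maximum with the average, while you fold directly at $\mu$ under \eqref{media} and recover \eqref{eqlem1} via the $\pm\omega$ case split.
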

  
  \begin{proof}
  Let   $\Pi_{\lambda, \omega}$ be the orthogonal projection onto $\pi_{\lambda, \omega}$, so that $\Pi_{\lambda, \omega}(\K)=\pi_{\lambda, \omega}\cap \K$. 
It must hold
 \[
 \H_\K(\omega)\ge \lambda\ge -\H_\K(-\omega)
 \]
  and we define  
  \[
 \lambda_1=\frac{\H_\K(\omega)+\lambda}{2}, \qquad  \lambda_2=\frac{\H_\K(-\omega)-\lambda}{2}\,. 
  \]
For  the caps $ \K_{ \lambda_1, \omega}$ and $\K_{-\lambda_2, -\omega}$
 we claim that 
 \be
 \label{claimrefl}
 T_{\lambda_1, \omega}( \K_{ \lambda_1, \omega})\subseteq \K, \qquad  T_{-\lambda_2, - \omega}( \K_{-\lambda_2, -\omega})\subseteq \K.
 \ee
  By convexity, given any point $x\in \K$, the segment  $[x, \Pi_{\lambda, \omega}(x)]$ is contained in $\K$. If $x\in  \K_{ \lambda_1, \omega}$, by \eqref{reflection}
\[
  \langle\omega,  T_{\lambda_1, \omega}(x)\rangle=2\,  \lambda_1-\langle\omega, x\rangle\ge 2\,  \lambda_1-\H_\K(\omega)= \lambda\, , 
\]
and we infer that $T_{\lambda_1, \omega}(x)$ lies on the segment $[x, \Pi_{\lambda, \omega}(x)]$ and, a fortiori, in $\K$. A symmetric argument shows that for any $x\in \K_{-\lambda_2, -\omega}$, the point $T_{-\lambda_2, -\omega}(x)$ lies on the segment $[x, \Pi_{\lambda,\omega}(x)]$, thus proving \eqref{claimrefl}.
It is readily checked that 
\[
\B_{\K_{  \lambda_1, \omega}}(\omega)=\frac{\H_\K(\omega)-\lambda}{2}, \qquad \B_{\K_{- \lambda_2, -\omega}}(\omega)=\frac{ \H_\K(-\omega)+\lambda}{2}
\]
so that 
\[
\begin{split}
\max\left\{\F_\K(\omega), \F_\K(-\omega)\right\}&\ge \max\big\{\B_{  \K_{  \lambda_1, \omega}}(\omega), \B_{  \K_{  - \lambda_2, -\omega}}(\omega)\big\}\\
&= \frac{1}{2}\, \max \left\{\H_\K(\omega)-\lambda,  \H_\K(-\omega)+\lambda\right\}\\
&\ge \frac{\H_\K(\omega)+\H_\K(-\omega)}{4}
\end{split}
\]
as claimed. The second assertion follows from \eqref{claimrefl} and the fact that, under assumption \eqref{media}, it holds 
\[
  \lambda_1\le \H_\K(\omega)-  \frac{\H_\K(\omega)+\H_\K(-\omega)}{4}=  \H_\K(\omega)-\frac{1}{4}\, \B_\K(\omega)\,.
\]
  \end{proof}

  The factor $1/4$ in \eqref{eqlem1} is optimal, as one can check considering a parallelogram constructed joining  a pair of congruent right isoscele triangles through a short side, see the following figure.
  \begin{figure}[h]
  \label{figoptimal}
 \begin{center}
 \begin{tikzpicture}[scale=2]
 \draw[thick] (0, 0)--(1, 1)--(0, 1)--(-1, 0)--(0, 0);
 \fill[fill=gray, fill opacity=0.1]  (0.5, 0.5)--(0.5, 1)--(0, 1)--(0.5, 0.5);
 \fill[fill=gray, fill opacity=0.1]  (-0.5, 0.5)--(0, 0)--(-0.5, 0)--(-0.5, 0.5);
  \draw  (0.5, 0.5)--(0.5, 1)--(0, 1)--(0.5, 0.5);
   \draw (-0.5, 0.5)--(0, 0)--(-0.5, 0)--(-0.5, 0.5);
   \draw[thick, -latex] (1, 0)--node[midway, above]{$\omega$} (2, 0);
 \end{tikzpicture}
 \end{center}
 \end{figure}
  \begin{lemma}\label{section}
Any convex body $\K\subseteq \R^2$ has a section which is a shadow in  direction $\omega^\bot$, where $\omega$ is a direction of minimal breadth.
\end{lemma}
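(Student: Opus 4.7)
The plan is to reduce the lemma to the existence of a \emph{double normal} for $\K$ in direction $\omega$---a pair $p_\pm\in\K$ with $p_+-p_-=\B_\K(\omega)\,\omega$---and then extract such a pair from the first-order minimality of the breadth function at $\omega$.

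Given a double normal, let $L$ be the line through $p_\pm$; it has direction $\omega$, equivalently normal $\omega^\bot$, so the orthogonal projection of $\K$ onto $L$ is by definition a shadow of $\K$ in direction $\omega^\bot$. Since $p_\pm$ realise the extrema of $\langle\cdot,\omega\rangle$ on $\K$, convexity forces $\K\cap L=[p_-,p_+]$; on the other hand the shadow is a segment on $L$ of length $\B_\K(\omega)=|p_+-p_-|$ containing $p_\pm$ as its own projections, hence it too equals $[p_-,p_+]$. The section $\K\cap L$ therefore coincides with this shadow, proving the lemma.

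To produce the double normal I would choose coordinates so that $\omega=e_1$, set $H_\pm=\H_\K(\pm e_1)$, and write the two contact sets as vertical segments $K_\pm=\{\pm H_\pm\}\times[a_\pm,b_\pm]$. A double normal exists precisely when $[a_+,b_+]\cap[a_-,b_-]\neq\emptyset$, with any $y^*$ in the intersection yielding $p_\pm=(\pm H_\pm,y^*)$. Consider then $b(\theta)=\B_\K((\cos\theta,\sin\theta))$, minimised at $\theta=0$ by assumption. A short computation (sketched below) gives the one-sided derivatives $b'_+(0)=b_+-a_-$ and $b'_-(0)=a_+-b_-$. Minimality forces $b'_+(0)\ge 0\ge b'_-(0)$, hence $a_-\le b_+$ and $a_+\le b_-$; combined with the trivial $a_\pm\le b_\pm$ this is exactly $\max(a_+,a_-)\le\min(b_+,b_-)$, so the intersection is non-empty and the double normal is obtained.

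The main technical step is justifying the derivative formula. Because $K_+$ is the \emph{entire} contact set of $\K$ with its right supporting line, the boundary $\partial\K$ must turn strictly inward at the upper endpoint $(H_+,b_+)$; hence the outer normal cone to $\K$ at that point contains an open arc of directions $\{(\cos\theta,\sin\theta):0<\theta<\alpha\}$ for some $\alpha>0$. For $\theta$ in this arc the unique support point of $\K$ in direction $(\cos\theta,\sin\theta)$ is $(H_+,b_+)$, so $\H_\K((\cos\theta,\sin\theta))=H_+\cos\theta+b_+\sin\theta$; the symmetric argument at $(-H_-,a_-)$ yields $\H_\K(-(\cos\theta,\sin\theta))=H_-\cos\theta-a_-\sin\theta$ for small $\theta>0$, and differentiating the sum in $\theta$ at $\theta=0^+$ gives $b'_+(0)=b_+-a_-$.
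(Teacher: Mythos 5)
Your route is genuinely different from the paper's and its skeleton is correct: the paper produces the needed chord from the Bonnesen--Fenchel characterisation of the width as $\min_{\omega}\max\{|x_1-x_2|:[x_1,x_2]\parallel\omega\}$ plus a Cauchy--Schwarz argument, whereas you produce a double normal from the first variation of the breadth at its minimum; the final step (a double normal $[p_-,p_+]$ in the minimal-breadth direction is simultaneously a section and the shadow in direction $\omega^\bot$) is common to both. Your derivative formulas $b'_+(0)=b_+-a_-$, $b'_-(0)=a_+-b_-$ and the deduction $\max(a_+,a_-)\le\min(b_+,b_-)$ are correct and give the double normal as claimed.

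There is, however, a genuine gap in your justification of the derivative formula. It is false that, because $K_+$ is the entire contact set, the outer normal cone of $\K$ at the endpoint $(H_+,b_+)$ contains an open arc of directions. For the unit disk (where every direction has minimal breadth) the contact set is a single point and the normal cone there is a single ray; and even when $a_+<b_+$ the boundary can leave the supporting line tangentially (e.g.\ continue the contact segment by an arc of the form $x_1=H_+-(x_2-b_+)^2$, $x_2\ge b_+$), so the normal cone at $(H_+,b_+)$ is again a ray and your identity $\H_\K\big((\cos\theta,\sin\theta)\big)=H_+\cos\theta+b_+\sin\theta$ fails for \emph{every} $\theta>0$. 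What saves the argument is that the derivative formula does not need this identity: it is the standard fact that the one-sided directional derivative of a support function is the support function of the corresponding contact set (face), i.e.\ with $u(\theta)=(\cos\theta,\sin\theta)$ one has $\lim_{\theta\to0^+}\theta^{-1}\big(\H_\K(u(\theta))-\H_\K(e_1)\big)=\sup\{x_2:x\in K_+\}=b_+$, and similarly for the other three terms (see Schneider, \emph{Convex Bodies: the Brunn--Minkowski Theory}, Theorem 1.7.2); the passage from the straight increment $e_1+\theta e_2$ to the curved one $u(\theta)$ is harmless because $\H_\K$ is locally Lipschitz. Replace your corner argument by this fact and the proof is complete.
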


  \begin{proof}
  This follows from a well known characterisation of the width of a convex body in $\R^N$.  By   \cite[Section 33]{BF},   it holds
  \[
{\rm width}\, (\K)= \min_{\omega\in \S^{N-1}}  \max\left\{|x_1- x_2|:x_1, x_2\in \K, [x_1, x_2]\parallel \omega\right\}.
  \]
  and the right hand side is attained at some $\bar\omega$ and $x_1, x_2\in \partial \K$ such that there are two parallel supporting hyperplanes of $\K$ through $x_1$ and $x_2$. If $\omega$ is the normal to these hyperplanes such that $\langle \omega, x_1-x_2\rangle\ge 0$, it holds  by construction  $\B_{\K}(\omega)=\langle \omega, x_1-x_2\rangle$ and, by Schwartz inequality and the definition of width,
  \[
  {\rm width}\, (\K)\le \B_{\K}(\omega)\le |x_1-x_2|={\rm width}\, (\K).
  \]
Therefore $\langle \omega, x_1-x_2\rangle=|x_1-x_2|$, implying that $\omega$ and $x_1-x_2$ are proportional and thus, being $[x_1-x_2]\parallel\bar\omega$, that $\omega=\pm\bar\omega$. In particular, $\bar\omega$ is a direction of minimal breadth and $\K$ lies between two hyperplanes orthogonal to $[x_1, x_2]\subseteq \K$, passing through $x_1$ and $x_2$. In two dimensions, this is equivalent to say that the section $[x_1, x_2]$ is a shadow in direction $\bar\omega^\bot$.
  \end{proof}
  
As already pointed out, the maximal folding height is only upper semicontinuous. The failure of continuity is due to flat parts of the boundary of $\K$, but the maximal folding height is too much nonlocal to determine where the flat parts are located when lower semicontinuity fails.  A way to localise the maximal folding height is to consider the function $\omega\mapsto \F_{\K_{\bar\lambda, \bar\omega}}(\omega)$
for suitable fixed caps $\K_{\bar\lambda, \bar\omega}$ of $\K$ and study its behaviour as $\omega\to \bar\omega$. Notice that $\F_\K(\bar\omega)\ge \F_{\K_{\bar\lambda, \bar\omega}}(\bar\omega)$ for any $\bar\lambda\in \R$, $\bar\omega\in \S^{1}$.
 
\begin{lemma}\label{rectangle}
Let $\K\subseteq \R^2$ be a convex body and $\pi_{\bar\lambda, \bar\omega}\cap \K$ be a  shadow  of $\K$. If 
\be
\label{liminf}
\liminf_{\omega\to \bar\omega} \F_{\K_{\bar\lambda, \bar \omega}}(\omega)<\F_{\K_{\bar\lambda, \bar\omega}}(\bar\omega)
\ee
then 
\[
\K\cap \Big\{x\in \R^2: \bar\lambda\le \langle x, \bar\omega\rangle\le \frac{\bar\lambda+\H_\K(\bar\omega)}{2}\Big\}
\]
 is a rectangle with sides parallel to $\bar\omega$ and $\bar\omega^\bot$.
\end{lemma}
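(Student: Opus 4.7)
I would set coordinates so that $\bar\omega=e_2$ and $\bar\lambda=0$, put $H:=\H_\K(\bar\omega)$, and write $K:=\K_{\bar\lambda,\bar\omega}$ for brevity. The shadow hypothesis presents $K$ as the region under the graph of a concave function: $K=\{(x_1,x_2):x_1\in I,\ 0\le x_2\le g(x_1)\}$ for a compact interval $I$ and concave $g:I\to[0,H]$ with $\max g=H$. Reflecting the top half of $K$ about the midline $\{x_2=H/2\}$ fits inside the bottom half thanks to the shadow and convexity, while smaller fold levels are ruled out by the $\omega$-range constraint, hence $\F_K(\bar\omega)=H/2$.

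From \eqref{liminf} extract $\omega_n\to\bar\omega$ with $\F_K(\omega_n)\to L<H/2$, and set $\lambda_n:=\H_K(\omega_n)-\F_K(\omega_n)\to\mu^*:=H-L>H/2$. The value $\lambda_n$ is the minimal level at which $T_{\lambda_n,\omega_n}(K_{\lambda_n,\omega_n})\subseteq K$, and minimality furnishes tight-contact pairs: points $p_n\in K_{\lambda_n,\omega_n}$ whose reflections $q_n:=T_{\lambda_n,\omega_n}(p_n)$ lie on $\partial K$, and any further decrease of $\lambda_n$ would push the corresponding reflected point outside $K$. Passing to a subsequence, $p_n\to p^*$ and $q_n\to q^*\in\partial K$ with $T_{\mu^*,\bar\omega}(p^*)=q^*$; since $\mu^*\in(H/2,H)$, both contact points are strictly interior to $K$ in direction $\bar\omega$, and $q^*$ sits strictly between $\pi_0$ and the midline $\{x_2=\mu^*\}$.

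The core step is to promote this tight contact into a genuine straight segment of $\partial K$ parallel to $\bar\omega$. A first-order perturbation argument, using the smoothness of $(\lambda,\omega)\mapsto T_{\lambda,\omega}$ and the concavity of $g$, shows that if the outward normal to $\partial K$ at $q^*$ had a nonzero component along $\bar\omega^\bot$, then for $n$ large one could decrease $\lambda_n$ by an amount proportional to that transverse component while keeping the perturbed reflection inside $K$, contradicting the minimality of $\lambda_n$. Hence the outward normal at $q^*$ is parallel to $\bar\omega^\bot$, so convexity produces a boundary segment of $K$ through $q^*$ parallel to $\bar\omega$; the shadow property then forces this segment to extend all the way down to $I\times\{0\}$. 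Applying the same analysis at $p^*$ and to the companion one-sided limit of $\omega_n\to\bar\omega$ implicit in the $\liminf$ yields a second vertical boundary segment on the opposite flank of $K$, and combined with the flat base $I\times\{0\}$ and the horizontal cut $\{x_2=H/2\}$ one concludes that $\K\cap\{0\le x_2\le H/2\}=I\times[0,H/2]$.

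The main technical obstacle is precisely this rigidity step: one must quantitatively check that a transverse component of the outward normal at $q^*$ produces first-order slack in the fold condition uniformly as $\omega_n\to\bar\omega$, and verify that this slack is not swamped by secondary contacts of the reflected cap with $\partial K$ elsewhere along the cap. The concavity of $g$ and the uniform convergence of the reflection maps $T_{\lambda_n,\omega_n}\to T_{\mu^*,\bar\omega}$ are the key tools for this quantitative control, together with careful bookkeeping of which side of $\bar\omega$ the sequence $\omega_n$ approaches from, to match the derived rigid segments with the two flanks of $K$.
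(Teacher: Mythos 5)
Your route is genuinely different from the paper's: you extract a sequence $\omega_n\to\bar\omega$ realizing the liminf, pass to a limiting contact pair $(p^*,q^*)$ of the critically folded caps, and try to convert the contact into rigidity of $\partial K$ at $q^*$; the paper instead never takes limits of contact points, but encloses the cap in its minimal bounding rectangle, assumes one of the two midpoints of the sides parallel to $\bar\omega$ is missing, and \emph{explicitly} constructs slightly tilted fold lines $s$ through the point $P$ (via the support line at $Q$ and an auxiliary circle centred at $P$) whose caps fold into $\K$, so that $\F_{\K}(\omega_s)\ge \mathrm{dist}(s\cap ABCD,CD)$ tends to $\B_\K(\bar\omega)/2$, against \eqref{liminf}. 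Your sketch, however, has genuine gaps. The step you yourself call the ``main technical obstacle'' is the entire content of the lemma: the claim that a transverse component of the normal at $q^*$ allows $\lambda_n$ to be decreased is exactly what must be proved, and nothing in the proposal controls the secondary contacts of the reflected cap elsewhere along $\partial K$. Moreover the degenerate case $L=0$, where $\mu^*=H$ and the cap and contact pair collapse to the single $\omega_n$-extreme vertex, is not covered by your assumption $\mu^*\in(H/2,H)$, and even granting the rigidity you obtain a vertical boundary segment only up to the height of $q^*$, which need not reach $H/2$, so the final identification of the slab with $I\times[0,H/2]$ is not justified.

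More fundamentally, there is no ``companion one-sided limit implicit in the $\liminf$'': hypothesis \eqref{liminf} supplies \emph{one} sequence $\omega_n\to\bar\omega$, which may approach from one side only, and a one-sided sequence can rigidify at most one flank. Concretely, take $K=\mathrm{co}\{(0,0),(1,0),(1,1)\}$ with $\bar\omega=e_2$, $\bar\lambda=0$: the shadow hypothesis holds, $\F_K(e_2)=1/2$, and for every $\omega=(-\sin\theta,\cos\theta)$ with small $\theta>0$ the unique $\omega$-extreme point of $K$ is the vertex $(1,1)$, whose reflection about any fold line strictly below its level moves by a positive multiple of $-\omega$ and leaves $\{x_1\le 1\}\supseteq K$, so $\F_K(\omega)=0$ along this one-sided family. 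Your contact analysis applied to such a sequence only ever sees the right flank $\{1\}\times[0,1]$ (which is indeed vertical), while the left flank is the hypotenuse and $K\cap\{0\le x_2\le 1/2\}$ is not a rectangle. Hence the two-flank conclusion cannot be extracted from a single sequence realizing the liminf: the side from which the directions approach $\bar\omega$ must be matched with the flank one wants to control (in the paper this matching is exactly what dictates the tilt of the fold lines, chosen according to which midpoint is assumed missing), and your proposal does not supply directions on the second side. As written, the argument proves at most that one of the two lateral sides of the slab is flat, not the full rectangle statement.
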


 \begin{proof}
 We will use the standard Euclidean geometry notation and the slope of a segment or a line is henceforth defined by measuring the angle formed by the latters with respect to horizontal lines in a fixed orthogonal coordinate system. We can suppose that $\K=\K_{\bar\lambda, \bar\omega}$ and,
after a rotation and translation, that $\bar\omega=(1, 0)$ and $\bar\lambda=0$, so that $\K$ is contained in a minimal rectangle $ABCD$ with sides parallel to the coordinate axes, with the segment $AB$ being a shadow of $\K$. In other terms 
\[
AB\subseteq \K\subseteq ABCD
\]
where, here and henceforth, by a sequence of points we understand their convex envelope.
\begin{figure}[b]
\centering

\begin{tikzpicture}[line cap=round,line join=round,scale=0.66]

\clip(-15,1) rectangle (-0.5,9);
\draw [line width=1pt] (-14,8)-- (-14,2);
\draw [line width=1pt] (-14,2)-- (-2,2);
\draw [line width=1pt] (-2,2)-- (-2,8);
\draw [line width=1pt] (-2,8)-- (-14,8);
\draw [line width=1pt] (-8,2)-- (-8,8);
\draw [line width=1pt,domain=-17.806666666666665:8.02] plot(\x,{(--44.84--2.98*\x)/6});
\draw [line width=1pt] (-14,7.3)-- (-2,7.3);
\draw [line width=1pt] (-14,2)-- (-8,3.5);

\draw [fill=black] (-14,8) node[left]{$A$} circle (1.5pt);
\draw [fill=black] (-14,2) node[left]{$B$} circle (1.5pt);
\draw [fill=black] (-2,2) node[right]{$C$} circle (1.5pt);
\draw [fill=black] (-2,8) node[right]{$D$} circle (1.5pt);
\draw [fill=black] (-8,2) node[below]{$M$} circle (1.5pt);
\draw [fill=black] (-8,8) node[above]{$N$} circle (1.5pt);
\draw [fill=black] (-8,3.5) node[below right]{$Q$} circle (1.5pt);
\draw [fill=black] (-2,6.48) node[below right]{$E$} circle (1.5pt);
\draw [fill=black] (-8,7.3) node[above right]{$P$} circle (1.5pt);
\draw [fill=black] (-2,7.3) node[right]{$G$}circle (1.5pt);
\draw [fill=black] (-14,7.3) node[left]{$F$} circle (1.5pt);
 
\end{tikzpicture}
\caption{The minimal rectangle $ABCD$ containing $\K$.}
\label{figgeo}
\end{figure}
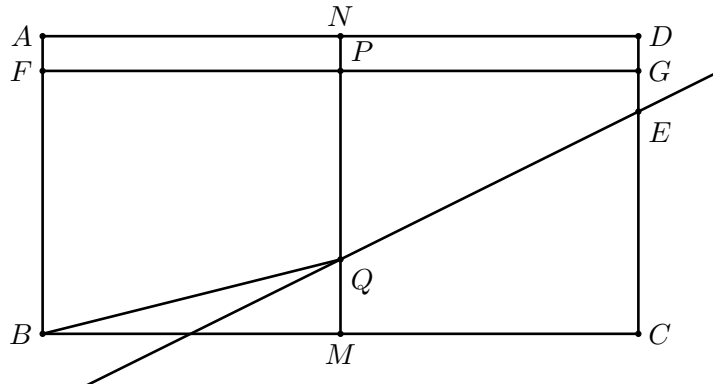
Let $M$ be the midpoint of $BC$ and $N$ the midpoint of $AD$, so that we have to prove that $\K\cap ABMN=ABMN$. By convexity, it suffices to prove that $M$ and $N$ belong to $\K$. Let $PQ=\K\cap MN$, with $P$ nearest to $N$ and $Q$ nearest to $M$ and assume, by contradiction and without loss of generality,  that $Q\ne M$. Let $r$ be a support line for $\K$ at $Q$. Since $ABCD$ is the smallest rectangle containing $\K$ and $\K$ is closed, $\K\cap CD$ is not empty. We deduce by convexity that $r$ has positive slope (since it cannot intersect $AB$), and must intersect $CD$ at a point $E\ne C$. Pick the horizontal line through $P$ and let $F$, $G$ be its intersections with  $AB$ and $CD$ respectively. The slope of a support line for $\K$ at $P$ must be non-positive, so that we deduce, (see Figure \ref{figgeo})
\be
\label{g1}
\K\cap MNDC\subseteq PQEG\,.
\ee
Moreover, the segments $BQ$ and $PF$ lie in $\K$ by convexity, hence
\be
\label{g2}
PQBF\subseteq \K\,.
\ee

Let ${\mathcal C}$ be the circle of center $P$ and radius $|PE|$ and consider the arc $\partial{\mathcal C}\cap PQBF$: it is non-empty since, being $P$ the midpoint of $FG$,  one of its extrema  is   the symmetric of $E$ under a reflection over the line through $P$ and $Q$. Let $R$ be such point. The arc $\partial{\mathcal C}\cap PQBF$ has   positive length, otherwise $\partial{\mathcal C}$ would be externally tangent to $PQBF$,  forcing $R=B$, thus $C=E$ and finally $Q=M$, contrary to the assumption.
Hence $\partial{\mathcal C}\cap PQBF=\wideparen{RS}$ for some point $S\ne R$. 

We have  $|PE|\ge |PG|$ since $PGE$ is right in $G$ and from $|PG|=|PF|$, we deduce that $PF\setminus\{R\}$ is interior to ${\mathcal C}$. In particular, $S\notin PF$. Since $R\in BF$, the point $S$ cannot belong to $BF$ either, thus $S\in BQ\cup PQ$. If $S\in BQ$, since ${\rm slope}\,(QE)\ge {\rm slope}\,(BQ)$, it holds
\be
\label{slope}
{\rm slope}\,(SE)\le {\rm slope}\,(QE)\,.
\ee
The same is trivially verified if $S\in PQ$, so the previous display always holds.

The axis of $SE$ goes through $P$, forming an angle $\alpha_0$ with $PQ$. Notice that  $S\ne R$ implies that $\alpha_0>0$.  
 Let $s$ be a line through $P$ and  a point  $H_s\in QE$ such that 
\be
\label{angle}
0<Q\widehat{P}H_s<\alpha_0
\ee
and $T_s$ be the reflection around $s$.

\begin{figure}[b]
\centering
 
\begin{tikzpicture}[line cap=round,line join=round, scale=0.65]
\clip(-14.6,-0.1) rectangle (-1.3,9);
\draw [line width=1pt] (-14,2)-- (-2,2);
\draw [line width=1pt] (-14,7.36)-- (-2,7.36);
\draw [line width=1pt] (-14,2)-- (-8,2.466666666666668);
\draw [line width=0.5pt] (-8,7.36) circle (6.337318044725228cm);
\draw [line width=1pt,domain=-17.806666666666665:2.9] plot(\x,{(-45.3696-7.026666666666664*\x)/1.4733333333333407});
\draw [line width=0.5pt, dotted] (-12.675185646648384,3.0816546224769414)-- (-2,5.32);
\draw [line width=0.5pt, dotted] (-13.5,5.05)-- (-2,7.36);
\draw [line width=1pt] (-14,7.36)-- (-14,2);
\draw [line width=1pt] (-2,7.36)-- (-2,2);
\draw [line width=1pt] (-8,7.36)-- (-8,2.466666666666668);
\draw [line width=1pt] (-8,2.466666666666668)-- (-2,5.32);
\draw [fill=black] (-13.5,5.05)  circle (1.5pt);
\draw  (-13.18,5.09) node[above]{$T_s(G)$} ;
\draw [fill=black] (-14,2) node[left]{$B$} circle (1.5pt);
\draw [fill=black] (-2,2) node[right]{$C$} circle (1.5pt);
\draw [fill=black] (-8,2.466666666666668) circle (1.5pt);
\draw (-8.1,2.63) node[below right]{$Q$};
\draw [fill=black] (-2,5.32) node[right]{$E$} circle (1.5pt);
\draw [fill=black] (-8,7.36) node[above right]{$P$} circle (1.5pt);
\draw [fill=black] (-2,7.36) node[above]{$G$} circle (1.5pt); 
\draw [fill=black] (-14,7.36) node[above]{$F$} circle (1.5pt);
\draw [fill=black] (-11.654373757474964,2.1824375966408365)  circle (1.5pt);
\draw  (-11.6,2.19) node[above]{$S$};
\draw [fill=black] (-14,5.32) node[left]{$R$} circle (1.5pt);
\draw[color=black] (-6.1,0.1) node {$s$};
\draw [fill=black] (-12.675185646648384,3.0816546224769414) circle (1.5pt);
\draw[color=black] (-12.38,3.5) node {$T_s(E)$};
\draw [fill=black] (-7.067009973274226,2.9103552571540345)  circle (1.5pt);
\draw (-7.08,3.1) node[below right]{$H_s$};
 
\end{tikzpicture}

 \label{figgeo2}
 \caption{}
\end{figure}

\noindent
If $x$ is a point of $\K$ on the right of $s$, let    $s^\bot_x$  be the perpendicular to $s$ passing through $x$.  Thus $s^\bot_x$ intersects the convex polygon 
\[
{\mathcal P}=FGEQB
\]
 in two points. From \eqref{angle} and \eqref{slope} we infer
\[
0<{\rm slope}\,(s^\bot_x)<{\rm slope}\,(SE)\le {\rm slope}\, (QE)\, ,
\]
hence  the leftmost intersection of $s^\bot_x$ with ${\mathcal P}$,  denoted by $x_s$, belongs to $FB\cup BQ$, which is contained in $\K$ by \eqref{g2}.
Therefore $[x, x_s]$ is contained in  $\K$. 

We claim that 
\be
\label{claim1}
T_s(PGEH_s)\subseteq {\mathcal P}\,.
\ee
To prove it, recall that $P, H_s\in {\mathcal P}$ so that it suffices by convexity to check that $T_s(E)$ and $T_s(G)$ belong to ${\mathcal P}$. This is clear for $T_s(E)$ since, as $Q\widehat{P}H_s$ varies between $0$ and $\alpha_0$, $T_s(E)$ runs over the arc $\wideparen{RS}\subseteq {\mathcal P}$. On the other hand $T_s(G)$ lies below the line through $FG$ and to the left of $s$ by construction. Moreover, it lies  on the circle with center $P$ and radius $|PG|$, which is tangent to $AB$ in $F$, therefore it lies on the right of the line through $FB$. Finally,   $T_s(G)G$ is parallel to $T_s(E)E$ and thus $T_s(G)$ lies above the line through $T_s(E) E$. It follows that    $ T_s(G)\in{\mathcal P}$, concluding the proof of \eqref{claim1}.

We can now show that  the reflection $T_s$ around $s$ of the cap $\K_s$ of $\K$ on the right of $s$ lies in $\K$. Indeed, \eqref{g1} implies that $ \K_s\subset PGEH_s$ and, given $x\in \K_s$,  \eqref{claim1} forces $T_s(x)$ to belong to $[x, x_s]\subseteq \K$. In particular, if $\omega_s\in \S^{N-1}$ is the normal to $s$ in the positive $x$-direction, 
\be
\label{dis}
\F_\K(\omega_s)\ge {\rm dist}\, (s\cap ABCD, CD)
\ee
since $CD$ contains at least a point of $\K$. On the other hand  $\F_\K(\bar\omega)=|AB|/2$
 and it is readily verified that, as $s$ converges to the line through $PQ$, the right hand side of \eqref{dis} converges to $|AB|/2$, contradicting \eqref{liminf}.
\end{proof}

  \section{Proof of the main Theorem and its consequences}\label{sec:proof}
  
\begin{proof}[Proof of Theorem \ref{Mth}] Let $\A={\rm Argmax}(u)$ which, by \cite{lou}, must have Lebesgue measure  zero thanks to the positivity of $f$. By the quasi-concavity assumption on $u$, $\A$ is convex, so that it can be either a point or a segment. Hence, in order to prove the theorem, we show that the latter case cannot occur, arguing by contradiction.
To this aim, suppose that $\A$ is a segment of length $\ell$ and with midpoint $x_0$, and denote by $\omega_\A\in \S^{1}$ a unit vector parallel to $\A$. 

 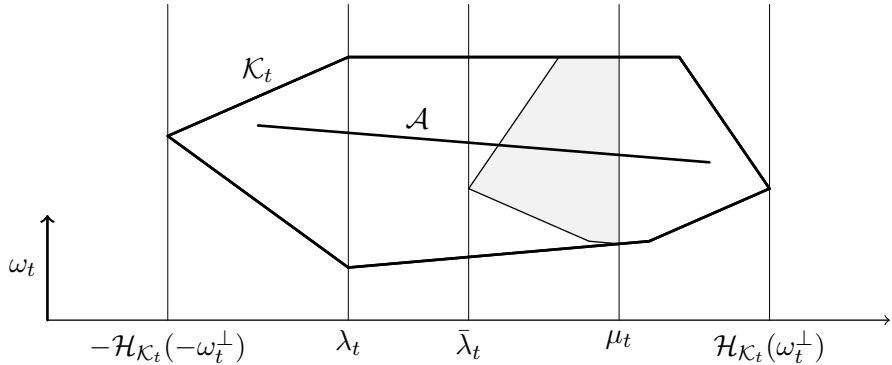
\begin{figure}[b]
\centering
\begin{tikzpicture}[line cap=round,line join=round, ,x=0.4cm,y=0.35cm]
 
\draw[line width=1pt] (7,0) -- (13,3) -- (24,3) -- (27,-2) -- (23,-4) -- (13,-5) -- cycle;
\fill[line width=1pt,fill=gray,fill opacity=0.1] (22,3) -- (20,3) -- (17,-2) -- (21,-4) -- (22,-4.1) -- cycle;
\draw [line width=1pt] (7,0)-- (13,3);
\draw [line width=1pt] (13,3)-- (24,3);
\draw [line width=1pt] (24,3)-- (27,-2);
\draw [line width=1pt] (27,-2)-- (23,-4);
\draw [line width=1pt] (23,-4)-- (13,-5);
\draw [line width=1pt] (13,-5)-- (7,0);
\draw [line width=0.3pt] (13,5)-- (13,-7);
\draw [line width=0.3pt] (7,5)-- (7,-7);
\draw [line width=0.3pt] (27,5)-- (27,-7);
\draw [line width=0.3pt] (22,5)-- (22,-7);
\draw [line width=1pt] (10,0.4)-- node[pos=0.35, sloped, above]{$\mathcal{A}$}(25,-1);
\draw [line width=0.3pt] (17,5)-- (17,-7);

\draw [line width=0.5pt, ->] (3,-7)-- (31,-7);

\draw [line width=1pt, ->] (3,-7)-- node[left, midway]{$\omega_t$}(3,-3) ;

\draw [line width=0.5pt] (22,3)-- (20,3);
\draw [line width=0.5pt] (20,3)-- (17,-2);
\draw [line width=0.5pt] (17,-2)-- (21,-4);
\draw [line width=0.5pt] (21,-4)-- (22,-4.1);
 \draw (10, 2.5) node{$\mathcal{K}_t$};
\draw (7,-7) node[below]{$-\mathcal{H}_{\mathcal{K}_t}(-\omega_t^\bot)$};
\draw  (13,-7) node[below]{$\lambda_t$};
\draw (17,-7) node[below]{$\bar\lambda_t$};
\draw  (22,-7) node[below]{$\mu_t$};
\draw   (27,-7) node[below]{$\mathcal{H}_{\mathcal{K}_t}(\omega_t^\bot)$};
\end{tikzpicture}
\caption{The convex body $\K_t$ and some related objects}
\end{figure}

For any  level $t<M:=\max u(\Omega)$, consider the convex body  $\K_t=\{x\in \Omega:u(x)\ge t\}$ and let $\omega_t$ be the direction of minimal breadth of $\K_t$. By Lemma \ref{section}, there exists $\lambda_t$ such that 
 \be
 \label{barlambda}
 \pi_{\lambda_t, \omega_t^\bot}\cap \K_t\quad \text{is a shadow of $\K_t$}
\ee
and we can fix the orientation of $\omega^\bot_t$ in such a way that
\be
\label{condlambda}
\lambda_t\le \bar\lambda_t:=\frac{\H_{\K_t}(\omega^\bot_t)-\H_{\K_t}(-\omega^\bot_t)}{2}\,.
\ee
Let 
\be
\label{mut}
\mu_t:= \H_{\K_t}(\omega_t^\bot)-\frac{1}{4}\, \B_{\K_t}(\omega_t^\bot)\ge\lambda_t+\frac{1}{4}\, \B_{\K_t}(\omega_t^\bot)
\ee
and 
\[
 \K_{t, \mu_t, \omega_t^\bot}=\{x\in \K_t: \langle x, \omega_t^\bot\rangle\ge \mu_t\}\,,
 \]
so that Lemma \ref{fold} ensures 
\be
\label{incl}
T_{\mu_t, \omega_t^\bot}(\K_{t, \mu_t, \omega_t^\bot})\subseteq \K_t\,.
\ee

  As $t\uparrow M$, $\K_t$ converges to $\A$ with respect to the Hausdorff distance, therefore 
 \[
 \lim_{t\uparrow M}\B_{\K_t}(\omega_t)=0, \qquad \lim_{t\uparrow M}\B_{\K_t}(\omega^\bot_t)=\ell\,,
 \]
where $\ell>0$ is the length of $\A$. Hence there exists $\bar t$ such that  if $M>t>\bar t$, it holds 
 \be
 \label{condt}
 \B_{\K_t}(\omega_t)\le \alpha\, \ell \qquad  \B_{\K_t}(\omega^\bot_t)\le \beta\, \ell\,.
 \ee
 with $\alpha>0$, $\beta>1$ fixed such that
 \be
 \label{alphabeta}
\alpha^2+ (3\, \beta/4)^2<1.
 \ee
By construction it holds
 \[
 \B_{\K_{t, \mu_t, \omega_t^\bot}}(\omega_t^\bot)=\frac{1}{4}\, \B_{\K_t}(\omega_t^\bot)\,,
 \]
 hence, by \eqref{condt}, $\K_t\setminus \K_{t,\mu_t, \omega_t^\bot}$ is contained in a rectangle having edges of length at most
\[
\B_{\K_t}(\omega_t)\le\alpha\, \ell\qquad  \text{and} \qquad   \frac{3}{4}\, \B_{\K_t}(\omega_t^\bot)\le \frac{3}{4}\, \beta\, \ell\,.
\]
Condition \eqref{alphabeta} ensures that ${\rm diam}(\K_t\setminus\K_{t, \mu_t, \omega_t^\bot})<\ell$, hence 
\be
\label{inter}
 \A\cap  \K_{t, \mu_t, \omega_t^\bot}\quad \text{is a segment of positive length for all $t\in (\bar t, M)$}.
 \ee
  
We will reach a contradiction   in each of the following three cases:  
\begin{enumerate}
\item For some $t\in (\bar t, M)$, $\omega_t^\bot\ne \pm \omega_\A$, the direction of $\A$;
\item For all $t\in (\bar t, M)$, $\omega_t^\bot\parallel\omega_\A$, but for some $t\in (\bar t, M)$ it holds
\be
\label{case2}
\liminf_{\omega\to \omega_t^\bot}\F_{\K_{t, \lambda_t, \omega_t^\bot}}(\omega)\ge \F_{\K_{t, \lambda_t, \omega_t^\bot}}(\omega_t^\bot)\,;
\ee
\item For all $t\in (\bar t, M)$, $\omega_t^\bot\parallel\omega_\A$ and
\be
\label{case3}
\liminf_{\omega\to \omega_t^\bot}\F_{\K_{t, \lambda_t, \omega_t^\bot}}(\omega)< \F_{\K_{t, \lambda_t, \omega_t^\bot}}(\omega_t^\bot)\,.
\ee
\end{enumerate}

\medskip
In case (1), we consider, for the corresponding $t$, the solution $ u_t$ of \eqref{equation} given by   
\[
 u_t(x)=u(T_{\mu_t, \omega_t^\bot}(x))\,,
\]
which, by \eqref{incl}, is well defined and positive  in $\K_{t, \mu_t, \omega^\bot_t}$ and fulfils
\[
u_t\ge u>0\qquad \text{on }\ \partial\K_{t, \mu_t, \omega^\bot_t}\,.
\]
The weak comparison principle of Lemma \ref{wc} implies that $u_t\ge u$ on $\K_{t, \mu_t, \omega^\bot_t}$, which is impossible since $u=M$ on the segment $\A\cap \K_{t, \mu_t, \omega^\bot_t}$, while  $\{x\in \K_{t, \mu_t, \omega^\bot_t}: u_t=M\} $
is a segment having direction $\omega_t^\bot\ne\pm\omega_\A$.

Consider case (2). Relations \eqref{mut} and \eqref{incl} imply that $\K_{t, \mu_t, \omega_t^\bot}$ is a cap of $ \K_{t, \lambda_t, \omega_t^\bot}$ in direction $\omega_t^{\bot}$ such that
\[
T_{\mu_t, \omega_t^\bot}(\K_{t, \mu_t, \omega_t^\bot})\subseteq \K_{t, \lambda_t, \omega_t^\bot}\,,
\]
therefore
\[
\F_{\K_{t, \lambda_t, \omega_t^\bot}}(\omega_t^\bot)\ge \frac{1}{4}\, \B_{\K_t}(\omega_t^\bot)\,.
\]
Thus \eqref{case2} forces, for some $t\in (\bar t, M)$,
\[
\liminf_{\omega\to \omega_t^\bot}\F_{\K_{t}}(\omega)\ge\liminf_{\omega\to \omega_t^\bot}\F_{\K_{t, \lambda_t, \omega_t^\bot}}(\omega)\ge \frac{1}{4}\, \B_{\K_t}(\omega_t^\bot)\, ,
\]
which allows to select a direction $\tilde\omega_t\ne \pm\omega_\A$ such that \eqref{incl} and \eqref{inter} continue to hold with $\tilde{\omega}_t$ instead of $\omega_t$. These are the only conditions needed to run the argument of case (1), giving again a contradiction.

 It remains to consider case (3), where in particular $\omega_t^\bot\parallel\omega_\A$ for all $t\in (\bar t, M)$.  Condition \eqref{case3} allows to apply Lemma \ref{rectangle}, hence for all $t\in (\bar t, M)$  the sections $\pi_{\lambda, \omega_\A}\cap \K_t$ are shadows of $\K_t$ in direction $\omega_\A$ for any $\lambda$ obeying   
 \[
\lambda_t\le \lambda \le  \frac{\lambda_t+\H_{\K_t}(\omega_\A)}{2}\,.
\]
  Since 
\[
 \frac{\lambda_t+\H_{\K_t}(\omega_\A)}{2}\ge \bar\lambda_t=\frac{\H_{\K_t}(\omega_\A)-\H_{\K_t}(-\omega_\A)}{2}\, ,
 \]
 we can suppose from the beginning that the $\lambda_t$ found in \eqref{barlambda} coincides with $\bar\lambda_t$.
 In particular, \eqref{condlambda}  holds in both directions $\omega_t$ and $-\omega_t$. Clearly  case (1) does not hold and, by the previous argument, we can rule out case (2) in both directions $\pm\omega_t$. So case (3) must hold, with \eqref{case3} being fulfilled in both directions $\pm\omega_t$. 
 
 Applying again Lemma \ref{rectangle} we infer that  the set $\K_t\cap S_t$ is a rectangle with sides parallel to $\omega_\A$ and $\omega_\A^\bot$, where 
\[
S_t= \Big\{x\in \R^2:\bar\lambda_t-\frac{1}{4}\, \B_{\K_t}(\omega_\A)\le \langle x, \omega_\A\rangle\le \bar\lambda_t+\frac{1}{4}\, \B_{\K_t}(\omega_\A)\Big\}.
\]
 Since $\K_t\to \A$ in Hausdorff distance, recalling that $x_0$ is the midpoint of $\A$ and $\ell$ its length, we have
 \[
  \bar\lambda_t\pm\frac{1}{4}\, \B_{\K_t}(\omega_\A)\to \langle x_0, \omega_\A\rangle\pm\frac{\ell}{4}
 \]
 as $t\to M$. Therefore, for a sufficiently small $\delta>0$,  the strip
 \[
 S=\bigcap_{M-\delta\le t\le M}S_t
 \]
 has positive width (almost $\ell/2$) and the level sets of $\{x\in S\cap \Omega: u(x)=t\}$ are all parallel to $\omega_\A$ for all $t\in (M-\delta, M)$. Thus  
 \[
 u(x)=h(\langle x, \omega_\A^\bot\rangle) \qquad \text{in }\  S\cap \K_{M-\delta}
 \]
 for some $h\in C^1(\R, \R_+)$. 
 
 The function $h$  solves the one-dimensional version of equation \eqref{equation} and it is readily checked that $h$ is even, positive and $h'$ vanishes only at the maximum point. Since $\nabla u=0$ on $\A$, it follows   that $\K_{M-\delta}\cap \pi_{\bar\lambda_t, \omega_\A^\bot}$ is a segment of length ${\rm width}\, (\K_{M-\delta}/2)$ with midpoint at $x_0$. Such a segment is also a shadow of $\K_{M-\delta}$, hence
 \be
 \label{hstrip}
 \begin{split}
 \K_{M-\delta}&\subset \left\{x\in \R^2: -{\rm width}\, (\K_{M-\delta})/2\le\langle x-x_0, \omega_\A^\bot\rangle\le {\rm width}\, (\K_{M-\delta})/2\right\}\\
 &=\{x\in \R^2:h(\langle x, \omega_\A^\bot)\ge t-\delta\}.
 \end{split}
 \ee
  We conclude by the strong comparison principle of Proposition \ref{sc}: the function  
  \[
  v(x)= h(\langle x, \omega_\A^\bot\rangle)
  \]
  is a positive solution of \eqref{equation} in $\K_{M-\delta}$, its gradient vanishes only on the line through $\A$ and   $v=u$ in an open subset of $\K_{M-\delta}$. Moreover,  \eqref{hstrip} implies that $v\ge u>0$ on $\partial \K_{M-\delta}$, thus the weak comparison principle of Lemma \ref{wc} implies that $v\ge u$. Since $\A=\{x\in \K_{M-\delta}: \nabla v(x)=\nabla u(x)=0\}$ and   $\K_{M-\delta}\setminus \A$ is connected, $u$ and $v$ must coincide.  Thus $u$ is one-dimensional in the whole $\K_{M-\delta}$, contradicting its boundedness and concluding the proof.
  \end{proof}

 We now turn to the proofs of Corollary \ref{scvarphi},   while the proof of \ref{sclog} follows similarly and is omitted. As remarked in the introduction, the following proof holds in all cases where $f$ obeys the assumptions of Corollary \ref{scvarphi} (or, with the same proof, \ref{sclog}), whenever $\Omega$ is a bounded convex domain such that problem \eqref{equation} has a unique solution. 
 
 \begin{proof}[Proof of Corollary \ref{scvarphi}]
 By \cite[Theorem 1.1]{BMS} we know that if $u\in W^{1,p}_0(\Omega)$ solves \eqref{equation}, then  $ v:=\varphi\circ  u$
 is concave, and thus $u$ is quasi-concave.  Moreover, well-known regularity arguments ensure $u$ and $v$ are of class $C^1(\Omega)$. By Theorem \ref{Mth},  $u$ (and thus $v$) attains its maximum  at a single point $\overline{x}\in\Omega$.  By the concavity of $v$, its gradient vanishes only at the maximum points, thus $\bar x$ is actually the only critical point of $v$.

 As computed in \cite{BMS}, the equation solved by $v$ in $\Omega$ is  
 \begin{equation}\label{eq:veq}
 -\Delta_p v=\frac{\psi''(v)}{\psi'(v)}\, (p+(p-1)\,\vert\nabla v\vert^p)\,, 
 \end{equation}
 where $\psi=\varphi^{-1}$ and 
 \[
 \frac{\psi''}{\psi'}=F^{1/p}\circ \psi \, \in C^{2, \alpha}\,. 
 \]
 Standard regularity theory applies in $\Omega\setminus\{\bar x\}$ ensuring that $v\in C^{3, \alpha}(\Omega\setminus\{\bar x\})$.

In terms of the convex function $w:=-v$, equation \eqref{eq:veq} in $\Omega\setminus\{\bar x\}$ can be written as
\[
G(D^2 w, Dw, w)=0\,,
\]
 where $G\in C^3\big(\mathcal S_+\times (\R^2\setminus\{0\})\times -\varphi(\R_+)\big)$ (recall that ${\mathcal S}_+$ is the cone of positive definite $2\times 2$ matrices) is of the form
\[
 G(X,\xi, t)=-\frac{1}{\Tr (A(\xi)\, X)}+ \frac{\psi'(-t)}{\psi''(-t)}\, b(\xi)\,.
\]
with
 \[
A(\xi):=I+(p-2)\, \frac{\xi}{|\xi|}\otimes\frac{\xi}{|\xi|}\,,
\]
which is positive definite for all $\xi\ne 0$, $p>1$,  and
 \[
 b(\xi):=\frac{1}{\vert \xi\vert^{2-p}\, (p+(p-1)\, \vert\xi\vert^p)}\,,
 \]
 which is non-negative.
We thus apply the microscopic convexity principle \cite[Theorem 1.1]{bianguan}, later improved in \cite{SW}. It is readily checked that, in $\Omega\setminus\{\overline{x}\}$, it holds 
\[
\lambda(x)\, |z|^2\le \langle D_XG(D^2 w(x), Dw(x), w(x))\, z, z\rangle\le \Lambda(x)\, |z|^2\qquad \forall z\in \R^2\,.
\]
for some  $\lambda, \Lambda\in C^0(\Omega\setminus\{\overline{x}\}, \R_+)$. As proved in \cite{BMS}, the assumptions on $F$ imply that the function $t\mapsto \psi'(-t)/\psi''(-t)$ is convex. Finally, the Appendix of \cite{ALL} shows that
\[
X\mapsto -\frac{1}{\Tr (A \, X^{-1})}
\]
is convex on ${\mathcal S}_+$, for each fixed $\xi\ne 0$ and $A$ positive definite, so that the map
 \[
 (X, t)\mapsto G(X^{-1},\xi, t) 
 \]
 is convex on ${\mathcal S}_+\times -\varphi(\R_+)$ for any fixed $\xi\ne 0$. Thus by \cite[Theorem 1.1]{SW} we conclude that the Hessian of $w$ has constant rank in $\Omega\setminus\{\overline x\}$. 
 
 We claim that  $D^2w$, or equivalently $D^2v$, has full rank on $\Omega\setminus\{\overline x\}$.
 Arguing by contradiction, suppose that ${\rm det}\, D^2v\equiv 0$   in $\Omega\setminus\{\overline x\}$.  This implies (see \cite{HN} or \cite[Theorem 2]{JP} for a more modern statement) that the graph of $v$ over $\Omega\setminus\{\overline x\}$ is developable:  for any point $x\in\Omega\setminus\{\overline x\}$, either $Dv$ is locally constant near $x$, or there is a line $l_x$ through $x$ such that $D v$ is constant on the connected component of $l_x\cap\big(\Omega\setminus\{\overline x\}\big)$ containing $x$.
The first alternative cannot hold, since otherwise the left hand side of \eqref{eq:veq} would vanish on an open set, while its  right hand side is strictly positive. So, we are left with the second alternative. 

Let $M=\sup_\Omega v$ and fix 
\[
 m\in \big(\inf_\Omega v, M\big).
 \]
 Given $\eps>0$, we  choose a point $x_0\in \Omega\setminus\{\bar x\}$ such that 
\be
\label{asd}
|Dv(x_0)|<\eps, \qquad v(x_0)>\frac{M+m}{2}\, .
\ee
The connected component of $l_{x_0}\cap \big(\Omega\setminus\{\bar x\}\big)$ containing $x_0$, provided by the second alternative of the developability of $v$, must intersect   $\partial\{x\in\Omega: v(x)> m\}$ at some point $x_1$ where $v(x_1)=m$.  Moreover, it holds 
\[
Dv(t\, x_0+(1-t)\, x_1)\equiv Dv(x_0)\qquad \text{for all $t\in [0, 1]$}.
\]
But then from \eqref{asd} we obtain
\[
\frac{M-m}{2}< v(x_0)-v(x_1) =  \int^1_0\frac{d}{dt}v(t\, x_0+(1-t)\, x_1)\, dt\le |Dv(x_0)|\, |x_0-x_1|< \eps\, {\rm diam}\, \Omega\, ,
\]
and taking $\eps$ sufficiently small gives a contradiction. Therefore $D^2 v$ has full rank in $\Omega\setminus\{\bar x\}$ and is positive definite there. 

It remains to prove that $v$ is strictly concave in $\Omega$, a property that can be characterised by strict concavity on each segment $[x, y]\subseteq\Omega$. For any such segment $[x, y]$, consider the function $g(t)=v(t\, x+(1-t)\, y)$: it is readily checked that $g\in C^1$  with strictly decreasing derivative, whether or not $\bar x\in [x, y]$. Thus $g$ is strictly concave and so is $v$.
   \end{proof}

 \appendix

\section{An example in three dimensions}
The next lemma shows that \eqref{eqlem1} cannot hold for arbitrary convex bodies in dimension 3.
\begin{lemma}\label{lemmaApp}
For $\alpha>0$ let $\K^\alpha\subseteq \R^3$ be defined as
\[
\K^\alpha=\big\{(x_1, x_2, x_3)\in \R^3: |x_3|\le \alpha \, x_1,\  |x_2|\le \alpha\, (1-x_1)\big\}
\]
(see Figure 5). Then for any $\omega\in \S^{2}$ it holds
\[
\F_{\K^\alpha}(\omega)\le 2\,  \alpha\, .
\]
\end{lemma}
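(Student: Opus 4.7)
My plan is to bound, for every foldable cap $\K^\alpha_{\lambda,\omega}$, the breadth $\beta:=\H_{\K^\alpha}(\omega)-\lambda$ by $2\alpha$; via the representation
\[
\F_{\K^\alpha}(\omega)=\sup\{\beta:T_{\lambda,\omega}(\K^\alpha_{\lambda,\omega})\subseteq\K^\alpha\}
\]
recalled before Lemma \ref{fold}, this yields the claim. First I observe that $\K^\alpha$ is the tetrahedron with vertices
\[
A=(0,\alpha,0),\quad B=(0,-\alpha,0),\quad C=(1,0,\alpha),\quad D=(1,0,-\alpha),
\]
and that it is invariant under the three isometries $x_2\mapsto -x_2$, $x_3\mapsto -x_3$, and the affine involution $\psi(x_1,x_2,x_3)=(1-x_1,x_3,x_2)$, whose group acts transitively on the four vertices. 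Using this symmetry I may assume that $\H_{\K^\alpha}(\omega)$ is attained at $C$; writing $\omega=(a,b,c)$, this forces $c\ge 0$ and $a+\alpha c\ge \alpha|b|$.

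By convexity, foldability of the cap is equivalent to $T_{\lambda,\omega}(v)\in\K^\alpha$ for every vertex $v$ of $\K^\alpha$ contained in the cap. Applied to the top vertex $C$, the first coordinate of
\[
T_{\lambda,\omega}(C)=(1-2a\beta,-2b\beta,\alpha-2c\beta)
\]
must satisfy $x_1\le 1$, which forces $a\ge 0$. The main step is then to bring in the \emph{opposite} vertex $D$, whose signed distance from the cut plane is $\tau_D:=\beta-2\alpha c$. If $\tau_D\le 0$, then $D$ is not strictly in the cap, giving $\beta\le 2\alpha c\le 2\alpha$ immediately since $c\le 1$. If instead $\tau_D>0$, then the single face inequality $|x_3|\le\alpha x_1$ of $\K^\alpha$ imposed on
\[
T_{\lambda,\omega}(D)=(1-2a\tau_D,-2b\tau_D,-\alpha-2c\tau_D)
\]
simplifies to $(c+\alpha a)\tau_D\le 0$. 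Since $a,c\ge 0$ and $\tau_D>0$, this forces $a=c=0$, so $\omega=(0,\pm 1,0)$; but then the maximum of $\K^\alpha$ in direction $\omega$ is attained at $A$ or $B$, not $C$, a contradiction. Hence $\tau_D>0$ never occurs, and $\beta\le 2\alpha$ in all cases.

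The only non-automatic ingredient is the symmetry reduction at the start: once the top vertex is fixed as $C$, a single defining inequality of $\K^\alpha$ applied to the reflection of the opposite vertex $D$ is strong enough both to exclude $D$ from the cap's interior and to give the claimed bound directly from $c\le 1$, with no optimization in $\omega$ or $\lambda$ required.
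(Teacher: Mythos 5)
Your argument is correct and is essentially the paper's own proof: a symmetry reduction to a distinguished vertex, followed by imposing the defining inequalities of $\K^\alpha$ on the explicit reflections of the vertices, with the final bound coming from the $2\alpha$ gap between the distinguished vertex and its mirror partner (your $\beta\le 2\alpha\,\omega_3\le 2\alpha$ via the pair $(C,D)$, versus the paper's $\beta\le 2\alpha\,\omega_2\le 2\alpha$ via $(z_1,z_2)$, where the paper normalizes $z_1$ to lie in the cap and then proves the support plane passes through it). The only implicit point is the degenerate case $\beta\le 0$, which is trivially within the bound.
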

\begin{figure}[h]
\centering
\begin{tikzpicture}[line cap=round,line join=round,x=1.3cm,y=1cm]
\draw[-latex] (0, 0)--(0, 3.5) node[left]{$x_3$};
\draw (0, -2.5)--(0, -1.11);
\draw[dotted, gray] (0, -1.1)--(0, 0);
\draw[-latex] (-1, -2)--(1, 2) node[right]{$x_2$};
\draw (-0.5, -1) node[above left]{$z_2$}--(0.5, 1) node[above left]{$z_1$}--(5, 0.6) node[above]{$z_3$}--(5, -2.1) node[below]{$z_4$}--(-0.5, -1);
\shade[top color=white!80!black, opacity=0.4, shading angle=90] (-0.5, -1)--(0.5, 1)--(5, 0.6)--(-0.5, -1);
\shade[top color=white!50!black, opacity=0.4, shading angle=180] (-0.5, -1)--(5, 0.6)--(5, -2.1);
\draw (5, 0.6)--(-0.5, -1);
\draw[dotted, gray] (0, 0)--(6, -1);
\draw[dotted, gray] (0.5, 1)--(5, -2.1);
\draw[very thick, gray, opacity=0.7] (1.2, -0.2)--(3.72, -0.62);
\draw [-latex] (5, -0.835)--(6, -1) node[below]{$x_1$};
\filldraw (-0.5, -1) circle (1pt) (0.5, 1) circle (1pt) (5, 0.6) circle (1pt) (5, -2.1) circle (1pt);
\draw [decoration = {calligraphic brace}, decorate] (5.1, 0.6) node{} -- node[midway, right]{$\alpha$} (5.1, -0.835);
\end{tikzpicture}

\caption{The convex body $\K^\alpha$ and, inside it, its heart.}
\end{figure}
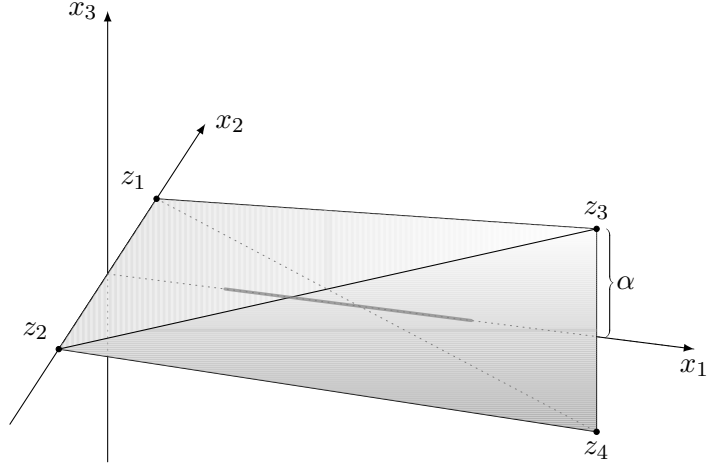

\begin{proof}
It is readily checked that 
\[
\K^\alpha={\rm co}\, \big(\{z_1, z_2, z_3, z_3\}\big)\, ,
\]
where
\[
z_1=(0, \alpha, 0)\, , \quad z_2=(0, -\alpha, 0)\, , \quad z_3=(1, 0, \alpha)\, , \quad z_4=(1, 0, -\alpha)\, \,,
\]
and ${\rm co}$ denotes the convex hull of a set.

In particular, $\K^\alpha$ has two trivial symmetries with respect to the planes $x_3=0$ and $x_2=0$, and a third symmetry given by 
\[
(x_1, x_2, x_3)\mapsto (1-x_1, x_3, x_2)\, .
\]
The first two symmetries exchange $z_3$ with $z_4$ and $z_1$ with $z_2$ respectively, while the third one exchanges  $z_1$ with $z_3$ and $z_2$ with $z_4$. 
In particular, the symmetries of $\K^\alpha$ induce the full permutation group on its extremal points.

Given $\omega=(\omega_1,\omega_2,\omega_3)\in \S^{2}$, let $\pi_{\lambda, \omega}$ be a plane intersecting $\K^\alpha$ such that $T_{\lambda, \omega}(\K^\alpha_{\lambda, \omega})\subseteq \K^\alpha$ and 
\[
\F_{\K^\alpha}(\omega)=\B_{\K^\alpha_{\lambda, \omega}}(\omega)\, .
\]
 We can suppose, without loss of generality due to the symmetries of $\K^\alpha$, that  $z_1\in \K^\alpha_{\lambda, \omega}$, i.\,e.
\be
\label{z1}
\langle z_1, \omega\rangle>\lambda\, ,
\ee
while, recalling \eqref{reflection}, $T_{\lambda, \omega}(z_1)\in \K^\alpha$ if and only if
\be
\label{Tz1}
\begin{cases}
|\omega_3|\le -\alpha\, \omega_1\\[2pt]
|\alpha-2\, \omega_2\, (\langle z_1, \omega\rangle-\lambda)|\le \alpha\, \big(1+2\, \omega_1\, (\langle z_1, \omega\rangle-\lambda)\big).
\end{cases}
\ee
These conditions imply
\be
\label{tz1}
|\omega_3|\le -\alpha\, \omega_1\le \omega_2
\ee
and in particular $\omega_1\le 0$.
Suppose   that 
\[
\langle z_2, \omega\rangle>\lambda\, .
\]
From  $T_{\lambda, \omega}(z_2)\in \K^\alpha$, with a similar computation as before,  we get in particular that $\omega_2\le \alpha\, \omega_1$.
 Recalling \eqref{tz1} we thus have
\[
|\omega_3|\le -\alpha\, \omega_1\le \omega_2\le \alpha\, \omega_1
\]
which forces $\omega=(0, 0, 0)$, giving a contradiction and proving that $\langle z_2, \omega\rangle\le \lambda$, which is equivalent to
\be
\label{z2}
 -\lambda\le \langle z_1, \omega\rangle\, .
\ee
Next we claim that
\be
\label{cls}
\sup\big\{\langle z, \omega\rangle: z\in \K^\alpha\big\}=\langle z_1, \omega\rangle\, .
\ee
Indeed, the supremum is attained at an extremal point for $\K^\alpha$, so it suffices to evaluate $\langle z_i, \omega\rangle$ for $i=1,\dots, 4$. Clearly \eqref{z1} and \eqref{z2} ensure that $\langle z_2, \omega\rangle \le \langle z_1, \omega\rangle$, while \eqref{tz1}  implies, in addition to $\omega_1\le 0$,
\[
\langle z_3, \omega\rangle= \omega_1+\alpha\, \omega_3\le \alpha\, |\omega_3|\le \alpha\, \omega_2=\langle z_1, \omega\rangle
\]
and similarly
\[
\langle z_4, \omega\rangle= \omega_1-\alpha\, \omega_3\le \alpha \, |\omega_3|\le \langle z_1, \omega\rangle\, .
\]
Thus \eqref{cls} is proved, showing that the plane with normal vector $\omega$ passing through $z_1$ is a support plane for $\K^\alpha$. It follows that 
\[
\B_{\K^\alpha_{\lambda, \omega}}(\omega)=\langle z_1, \omega\rangle-\lambda
\]
and we can finally take advantage of \eqref{z2} to get
\[
\B_{\K_{\lambda, \omega}^\alpha}(\omega)\le 2\, \langle z_1, \omega\rangle=2\, \alpha\, \omega_2\le 2\, \alpha\, ,
\]
where we have used the fact that $\omega_2\geq0$, by \eqref{tz1}.

\end{proof}

The failure of \eqref{eqlem1} does not imply in itself that the plan outlined in the Introduction must fail in dimension 3. However, it does so, as the following refinement of Lemma \ref{lemmaApp} shows.
Recall that the {\em heart} of a convex body is defined as
\[
\heartsuit\, (\K)= \K\setminus \bigcup\big\{\K_{\lambda, \omega}:T_{\lambda, \omega}(\K_{\lambda, \omega})\subseteq \K\big\}\,.
\]
\begin{lemma}
For $\alpha>0$, let $\K^\alpha$ be as in the previous Lemma.  Then
\be
\label{fin}
\heartsuit\,  (\K^\alpha)\supseteq \big\{(t, 0, 0): 2\, \alpha\le t\le 1-2\, \alpha\big\}. 
\ee
for all sufficiently small $\alpha>0$.
 \end{lemma}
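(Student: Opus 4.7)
The plan is to argue by contradiction, reusing the scheme of the proof of Lemma \ref{lemmaApp} but this time with $(t,0,0)$ playing an active r\^ole alongside $z_1$. Assume that for some $t\in [2\alpha,1-2\alpha]$ there exist $\omega=(\omega_1,\omega_2,\omega_3)\in \S^2$ and $\lambda\in\R$ with $T_{\lambda,\omega}(\K^\alpha_{\lambda,\omega})\subseteq \K^\alpha$ and $t\omega_1>\lambda$, so that $(t,0,0)$ is a strict interior point of a foldable cap; the goal is to derive a contradiction for all sufficiently small $\alpha>0$ (at least $\alpha<1/4$, so that the interval $[2\alpha,1-2\alpha]$ is nontrivial to begin with).

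The first step is a symmetry reduction. The three involutive symmetries of $\K^\alpha$---the reflections across $\{x_2=0\}$ and $\{x_3=0\}$ together with the swap $(x_1,x_2,x_3)\mapsto (1-x_1,x_3,x_2)$---act transitively on $\{z_1,z_2,z_3,z_4\}$ and leave the central axis $\{(s,0,0):s\in [0,1]\}$ invariant, sending each $(s,0,0)$ to itself or to $(1-s,0,0)$. Since $(t,0,0)$ is a strict convex combination of all four vertices, the open cap must contain at least one $z_i$, and after applying an appropriate symmetry I may assume $z_1\in \K^\alpha_{\lambda,\omega}$, while the test point has become $(t',0,0)$ with $t'\in\{t,1-t\}\subseteq [2\alpha,1-2\alpha]$.

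With this canonical configuration the proof of Lemma \ref{lemmaApp} directly yields $\omega_1\le 0$, $|\omega_3|\le \alpha|\omega_1|$, $\omega_2\ge \alpha|\omega_1|$ and $-\alpha\omega_2\le \lambda$. A closer look at the constraint on the $x_2$-coordinate of $T_{\lambda,\omega}(z_1)\in \K^\alpha$ produces the sharp reflection bound
\[
s_1\,(\omega_2+\alpha|\omega_1|)\le \alpha,\qquad s_1:=\alpha\omega_2-\lambda>0.
\]
The hypothesis $\lambda<t'\omega_1=-t'|\omega_1|$ combined with $-\alpha\omega_2\le \lambda$ gives the double lower bound $\omega_2\ge (t'/\alpha)|\omega_1|$ and forces $s_1>\alpha\omega_2+t'|\omega_1|$. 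Inserting this into the sharp bound produces
\[
(\alpha\omega_2+t'|\omega_1|)\,(\omega_2+\alpha|\omega_1|)<\alpha.
\]
Expanding, using $\omega_2^2=1-\omega_1^2-\omega_3^2$ together with $\omega_3^2\le \alpha^2\omega_1^2$, and finally using $\omega_2\ge (t'/\alpha)|\omega_1|$ to eliminate $\omega_2$ (in the case $|\omega_1|>0$), one collapses everything to
\[
t'^2+2\,t'\alpha^2<\alpha^2+\alpha^4,
\]
whose left-hand side exceeds $4\alpha^2$ as soon as $t'\ge 2\alpha$, contradicting the right-hand side for small $\alpha$. The degenerate case $|\omega_1|=0$ is ruled out directly: then $\omega_3=0$ and $\omega_2=\pm 1$, and computing $T_{\lambda,(0,\pm 1,0)}(z_1)$ shows that the inclusion in $\K^\alpha$ forces $\lambda\ge 0$ (respectively $\lambda\le -\alpha$), while the hypothesis $t'\omega_1>\lambda$ reads $\lambda<0$.

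The main obstacle is the interplay between the symmetry reduction and the extraction of the sharp bound on $s_1$: the generic constraints already appearing in Lemma \ref{lemmaApp} are too loose to rule out large $t'$, and only the finer $x_2$-coordinate inequality for the reflection of $z_1$, together with the double lower bound $\omega_2\ge \max\{\alpha,\,t'/\alpha\}|\omega_1|$, is strong enough to close the argument by a short quadratic computation.
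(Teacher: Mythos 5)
Your argument is correct and is essentially the paper's own proof: the same symmetry reduction to $z_1\in\K^\alpha_{\lambda,\omega}$ with the test point moved to $(t',0,0)$, followed by the same ingredients, namely $-\lambda\le\langle z_1,\omega\rangle=\alpha\,\omega_2$ (i.e.\ \eqref{z2}, from excluding $z_2$ from the cap), the reflection constraint $(\alpha\,\omega_2-\lambda)(\omega_2-\alpha\,\omega_1)\le\alpha$ from \eqref{Tz1bis}, and $1-\omega_2^2\le(1+\alpha^2)\,\omega_1^2$. The only differences are cosmetic: the paper bounds $\bar t=\lambda/\omega_1$ by optimizing over $\xi=\omega_2/(-\omega_1)$, obtaining $\bar t\le\alpha\big(\sqrt{2\alpha^2+1}-\alpha\big)$, whereas you feed in $t'\ge 2\alpha$ and collapse the same inequalities to the quadratic contradiction $t'^2+2t'\alpha^2<\alpha^2+\alpha^4$; your parenthetical for $\omega_1=0$ concerning $\omega_2=-1$ is moot (that branch is already excluded by $\omega_2\ge-\alpha\,\omega_1\ge 0$), and your strict-inequality reading $t'\omega_1>\lambda$ is the same (Brasco--Magnanini) convention on the heart that the paper itself uses implicitly.
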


\begin{proof}
Define
\[
\K^0=\big\{(t, 0, 0)\in \R^3: 0\le t\le 1\big\}.
\]
By \cite[Theorem 2.4]{BM} and the symmetries of $\K^\alpha$, we already know that 
\be
\label{bm}
\heartsuit\,  (\K^\alpha)\subseteq \K^0\, .
\ee
Let $\pi_{\lambda, \omega}$ be as in the previous proof, i.\,e., fulfilling
\[
T_{\lambda, \omega}(\K^\alpha_{\lambda, \omega})\subseteq \K^\alpha, \qquad \langle z_1, \omega\rangle>\lambda\, .
\]
 We claim that  for sufficiently small $\alpha$'s it holds
\be
\label{clst}
\sup\big\{t: (t, 0, 0)\in \K_{\lambda, \omega}^\alpha\big\}\le 2\, \alpha\, .
\ee
Before proving the claim, let us show how \eqref{clst} (under the assumption $z_1\in \K_{\lambda, \omega}^\alpha$) implies \eqref{fin}. By the symmetries of $\K^\alpha$ discussed at the beginning of the proof of Lemma \ref{lemmaApp}, we  infer that \eqref{clst} holds true if  $z_2\in \K_{\lambda, \omega}^\alpha$, while  if $ z_3$ or $z_4$ belong to $\K_{\lambda, \omega}^\alpha$, we get
\[
\inf\big\{t: (t, 0, 0)\in \K_{\lambda, \omega}^\alpha\big\}\ge 1- 2\, \alpha\, .
\]
 These two inequalities show that in all cases when $T_{\lambda, \omega}(\K_{\lambda, \omega}^\alpha)\subseteq \K^\alpha$, the cap $\K_{\lambda, \omega}^\alpha$ cuts a segment of length at most $2\, \alpha$ either on the left or on the right of $\K^0$. Recalling \eqref{bm}, this shows \eqref{fin}.

We next focus on proving \eqref{clst}, assuming that
\be
\label{assA}
\sup\big\{t: (t, 0, 0)\in \K_{\lambda, \omega}^\alpha\big\}>0
\ee
(otherwise there is nothing to prove). From the previous proof, we know \eqref{Tz1} is fulfilled and is equivalent to
\be
\label{Tz1bis}
\begin{cases}
|\omega_3|\le -\alpha\, \omega_1\le \omega_2\\[2pt]
\alpha\ge (\omega_2-\alpha\, \omega_1) (\langle z_1, \omega\rangle-\lambda)\, ,
\end{cases}
\ee
 meanwhile \eqref{z2} holds true as well.
Notice that it always holds $\omega_1\le 0\le \omega_2$ and that $\omega_1=0$ forces $\omega=(0, 1, 0)$. In this case $\K_{\lambda, \omega}^\alpha$ does not intersect the $x_1$-axis and the supremum in \eqref{assA} is $-\infty$, so we can suppose that $\omega_1<0$.

The maximum in \eqref{clst} is attained for some $\bar t$ such that $\bar z=(\bar t, 0, 0)\in \K^0$ and
\[
\bar z\in \partial \K_{\lambda, \omega}^\alpha\subseteq\partial \K^\alpha\cup \pi_{\lambda, \omega}\, .
 \]
The only  point in $\K^0\cap \partial \K^\alpha$ fulfilling \eqref{assA} is $\bar z=(1, 0, 0)$, which cannot lie in $\K_{\lambda, \omega}^\alpha$: otherwise, from $T_{\lambda, \omega}(\bar z)\in \K^\alpha$ we would infer\footnote{from the condition $|x_2|\le \alpha\, (1-x_1)$ evaluated for $T_{\lambda, \omega}(\bar z)$ and cancelling out the positive factor $(\langle \bar z, \omega\rangle-\lambda)$} $\omega_2 \le \alpha \, \omega_1$, which implies $\omega_1=\omega_2=0$ (by the first condition in \eqref{Tz1bis}), but then we would have  $\langle \bar z, \omega\rangle=0$, against $\bar z\in\K_{\lambda, \omega}^\alpha$.  We therefore conclude that $\bar z\in \pi_{\lambda, \omega}$ and thus (recall that $\omega_1<0$ and \eqref{assA} is assumed) we have
\[
\bar t=\sup\big\{t: (t, 0, 0)\in \K_{\lambda, \omega}^\alpha\big\}=\lambda/\omega_1>0\, ,
\]
and, in particular, $\lambda<0$.
Now, on one hand \eqref{z2}  provides the bound
\be
\label{b1}
\bar t\le \alpha\, \frac{\omega_2}{- \omega_1}\, .
\ee
On the other hand the second condition \eqref{Tz1bis} reads
\[
-\lambda\le \frac{\alpha}{\omega_2-\alpha\, \omega_1}-\alpha\, \omega_2
\]
which, divided by  $-\omega_1>0$, is equivalent to 
\[
\frac{\lambda}{\omega_1}\le \alpha\, \frac{1-\omega_2^2+\alpha\, \omega_1\, \omega_2}{\alpha\, \omega_1^2-\omega_1\, \omega_2}\, .
\]
Using the first conditions in \eqref{Tz1bis} and $|\omega|=1$, we get
\[
1-\omega_2^2=\omega_1^2+\omega_3^2\le (1+\alpha^2)\, \omega_1^2
\]
which, inserted in the previous display, gives the second bound
\be
\label{b2}
\bar t= \frac{\lambda}{\omega_1}\le \alpha\, \frac{-(1+\alpha^2)\, \omega_1-\alpha\, \omega_2}{\omega_2-\alpha\, \omega_1}\, .
\ee
In terms of the auxiliary non-negative variable $\xi=\omega_2/(-\omega_1)$, the bounds \eqref{b1} and \eqref{b2} yield
\[
\bar t\le\alpha\,  \max_{\xi\ge 0} \min\Big\{\xi, \frac{1+\alpha^2-\alpha\, \xi}{\xi+\alpha}\Big\}
\]
which is readily evaluated as 
\[
\bar t\le \alpha\, \Big(\sqrt{2\, \alpha^2+1}-\alpha\Big),
\]
proving \eqref{clst} for sufficiently small $\alpha$.
\end{proof}

Thanks to the previous Lemma, we can provide the example mentioned in the Introduction.

\begin{ex}\label{exapp}
We can rescale the convex body $\K^\alpha$ by a multiple $l>0$, to obtain the general form of \eqref{fin} for sufficiently small $\alpha>0$, i.\,e.
\be
\label{fin2}
\heartsuit\,  ( l\, \K^\alpha)\supseteq \big\{(t, 0, 0): 2\, \alpha\, l\le t\le l\, (1-2\, \alpha)\big\}.
\ee
For $l_n=1+1/n$, we choose $\alpha_n>0$ sufficiently small such that 
\be
\label{fin3}
 l_n\, (1-4\, \alpha_n\, l_n)\ge 1
\ee
and in particular $\alpha_n\to 0$ as $n\to +\infty$. Then we define the sequence of convex bodies
\[
\K_n=(-2\, \alpha_n\, l_n, 0, 0)+l_n\, \K^\alpha_n\, .
\]
By \eqref{fin2} and \eqref{fin3}, it holds 
\[
\heartsuit\, (\K_n)\supseteq \K^0=\big\{(t, 0, 0): 0\le t\le 1\big\},
\]
while $\K_n\to \K^0$ in the Hausdorff metric. By the very definition of $\heartsuit \K_n$, all foldable caps of $\K_n$ are disjoint from $\K^0$.
\end{ex}

\end{document}